%
%
%
%
\documentclass{amsart}

\usepackage{amsmath, amsthm, amssymb, amscd}
\usepackage{ascmac}
\usepackage[all]{xy}
\usepackage{enumerate}
\usepackage{graphics}
\usepackage{color}

\newtheorem{theorem}{Theorem}[section]
\newtheorem{lemma}[theorem]{Lemma}
\newtheorem{prop}[theorem]{Proposition}

\theoremstyle{definition}

\theoremstyle{remark}
\newtheorem{rem}[theorem]{Remark}

\numberwithin{equation}{section}



\def\Im{\mathop{\mathrm{Im}}\nolimits}
\def\Ker{\mathop{\mathrm{Ker}}\nolimits}
\def\Coker{\mathop{\mathrm{Coker}}\nolimits}
\def\Hom{\mathop{\mathrm{Hom}}\nolimits}

\def\sgn{\mathop{\mathrm{sgn}}\nolimits}

\def\Aut{\mathop{\mathrm{Aut}}\nolimits}
\newcommand{\C}{\mathbb{C}}

\newcommand{\Z}{\mathbb{Z}}
\newcommand{\Q}{\mathbb{Q}}
\newcommand{\M}{\mathcal{M}}
\newcommand{\I}{\mathcal{I}}

\newcommand{\A}{\mathcal{A}}

\def\GL{\mathop{\mathrm{GL}}\nolimits}
\def\Sp{\mathop{\mathrm{Sp}}\nolimits}
\newcommand{\isoto}[1][]%
{{\mathop{\buildrel{\sim}\over\longrightarrow}\limits_{#1}}}
\def\L{\mathop{\mathcal{L}}\nolimits}

\def\C{\mathop{\mathcal{C}}\nolimits}

\def\gr{\mathop{\mathrm{gr}}\nolimits}
\def\Tr{\mathop{\mathrm{Tr}}\nolimits}
\def\Fab{\mathop{F_n^{\text{ab}}}}
\def\IA{\mathop{\mathrm{IA}}\nolimits}
\newcommand{\mf}[1]{{\mathfrak{#1}}}

\newcommand{\bb}[1]{{\mathbb{#1}}}



\begin{document}

\title[A comparison of classes in the Johnson cokernels]{A comparison of classes in \\ the Johnson cokernels of \\ the mapping class groups of surfaces}



\author{Naoya Enomoto}
\address{The University of Electro-Communications.}
\email{enomoto-naoya@uec.ac.jp}

\author{Yusuke Kuno}
\address{Tsuda University.}
\email{kunotti@tsuda.ac.jp}

\author{Takao Satoh}
\address{Tokyo University of Science.}
\email{takao@rs.tus.ac.jp}

\subjclass[2000]{57N05, 20F34, 32G15, 17B10, 20F28}

\date{}


\keywords{Johnson homomorphism, Mapping class group}

\begin{abstract}
In \cite{ES2}, the first and the third authors introduced new classes in the Johnson cokernels of the mapping class groups of surfaces by a representation theoretic approach based on some previous results for the Johnson cokernels of the automorphism groups of free groups.
On the other hand, in \cite{KK1}, Kawazumi and the second author introduced another type of classes by a topological consideration of self-intersections of curves on a surface.

In this paper, we show that the classes found in \cite{KK1} are contained in the classes found in \cite{ES2}
in a stable range.
Furthermore, we prove that
the anti-Morita obstructions $[1^{4m+1}]$ for $m \ge 1$ obtained in \cite{ES2} 
and a hook-type component $[3,1^5]$ detected in \cite{EE} appear in their gap.
\end{abstract}

\maketitle

\section{Introduction}

Let $\Sigma_{g,1}$ be a compact oriented surface of genus $g$ with one boundary component.
The mapping class group $\M_{g,1}$ is the group of isotopy classes of orientation preserving diffeomorphisms of $\Sigma_{g,1}$ which fix the boundary component pointwise.
The Torelli group $\I_{g,1}$, which consists of mapping classes acting trivially on the first homology $H=H_1(\Sigma_{g,1}, \bb{Z})$, is an important subgroup of $\M_{g,1}$.
There is a central filtration
$
\I_{g,1}=\M_{g,1}(1) \supset \M_{g,1}(2) \supset \M_{g,1}(3) \supset \cdots
$
defined by the action on the nilpotent quotients of the fundamental group of $\Sigma_{g,1}$.
The associated graded quotient of this filtration is described by the Johnson homomorphisms
\[
\tau_k^\M : \gr^k(\M_{g,1}) \hookrightarrow \mf{h}_{g,1}(k), \quad k\ge 1.
\]
Here, $\gr^k(\M_{g,1}) = \M_{g,1}(k)/\M_{g,1}(k+1)$ and $\mf{h}_{g,1}(k)$ is the kernel of the Lie bracket
$H \otimes_\bb{Z} \L_{2g}(k+1)\to \L_{2g}(k+2)$, where $\L_{2g} = \bigoplus_{m\ge 1} \L_{2g}(m)$ is the free Lie algebra generated by $H = \L_{2g}(1)$.
Note that the collection $\{ \tau_k^\M \}_k$ defines an injective homomorphism of graded Lie algebras:
\[
\tau^\M: \gr(\M_{g,1}) = \bigoplus_{k \ge 1} \gr^k(\M_{g,1}) \hookrightarrow
\mf{h}_{g,1} = \bigoplus_{k\ge 1} \mf{h}_{g,1}(k).
\]
The space $\mf{h}_{g,1}$ is called the Lie algebra of symplectic derivations \cite{Mo1, Ko1}.

The Johnson homomorphisms were introduced by Johnson \cite{Jo1, Jo2}, and Morita \cite{Mo1} gave a refinement of the target.
For recent developments in the theory of Johnson homomorphisms, we refer to expository articles \cite{HM, Hain19, KK2, MorSurvey, Sak, S3}.


A particularly important fact is that the map $\tau_k^\M$ is equivariant with respect to the action of the group $\M_{g,1}/\I_{g,1} \cong  \Sp(2g,\bb{Z})$.
This fact enables us to make use of representation theory to analyze $\tau_k^\M$, in particular when we work over a field of characteristic zero.
In what follows, putting $\bb{Q}$ as a subscript or a superscript means that one takes tensor product with the rationals.

As shown by Johnson \cite{Jo1} the first Johnson homomorphism $\tau_1^\M$ is surjective.
It was first observed by Morita \cite{Mo1} that the map $\tau_k^\M$ is not surjective for higher $k$.
That is, for any odd $k \ge 3$, he constructed the surjective homomorphism
$$\Tr_k : \mf{h}_{g,1}^\bb{Q}(k) \rightarrow S^kH_\bb{Q},$$
where $S^k$ means the $k$th symmetric tensor product, and proved that $\Tr_k \circ \tau_k^\M \equiv 0$.
In other words, the map $\Tr_k$ is an obstruction for the surjectivity of the $k$th Johnson homomorphism $\tau_k^\M$.
We call the quotient of $\mf{h}_{g,1}^{\bb{Q}}(k)$ by the image of $\tau_{k,\bb{Q}}^\M$ the $k$th Johnson cokernel of the mapping class group $\M_{g,1}$.
The $\Sp$-module structure of the Johnson cokernels becomes an interesting object of study.
The Morita trace $\Tr_k$ detects the unique $\Sp$-irreducible component $S^k H_{\bb{Q}}$ in the $k$th Johnson cokernel.

In \cite{ES2}, the first and the third authors introduced the $\Sp$-homomorphism
\[ c_k:\mf{h}_{g,1}^\bb{Q}(k) \rightarrow \C_{2g}^\bb{Q}(k). \]
(See \S \ref{subsec:ESobs} for its definition.)
Here, $\C_{2g}^\bb{Q}(k)$ is the quotient module of $H^{\otimes{k}}_\bb{Q}$ with respect to the action of the cyclic group of order $k$
as cyclic permutations of the components of $H^{\otimes{k}}_\bb{Q}$.
By using the third author's result in \cite{Sa}
that the space $\C_{2g}^\bb{Q}(k)$ coincides with the $k$th Johnson cokernel of the automorphism group of the free group,
they proved that
\[
\Im(\tau^\M_{k,\bb{Q}})\subset \Ker(c_k) \subset \mf{h}_{g,1}^\bb{Q}(k)
\]
in a stable range.
The map $c_k$ is a refinement of $\Tr_k$ in the sense that $\Ker(c_k)\subset \Ker(\Tr_k)$.
Moreover, in \cite{ES2} it was shown that for $k \equiv 1 \ (\text{mod} \ 4)$ and $k \ge 5$, an $\Sp$-irreducible component $[1^k]$ is detected in $\mf{h}_{g,1}^\bb{Q}(k)/\Ker(c_k)$, hence in the $k$th Johnson cokernel.
We call this component the anti-Morita obstruction.

There are several studies on the trace maps $c_k$ and their application to the Johnson cokernels.
In \cite{EE}, the first author and Hikoe Enomoto detected several series of hook-type components in $\mf{h}_{g,1}^\bb{Q}(k)/\Ker(c_k)$.
Recently, by using the hairy graph complex, Conant \cite{C} detected new $\Sp$-components in the Johnson cokernels which cannot be detected by the trace maps $c_k$.

At the present stage, the structure of the Johnson cokernels has not been completely determined.
By using the trace map $c_k$, Morita, Sakasai and Suzuki \cite{MSS} determined it up to degree $6$.

In \cite{KK1}, Kawazumi and the second author introduced the map
\[
\delta_k^{\text{alg}}:\mf{h}^\bb{Q}_{g,1}(k)\to \bigoplus_{\substack{p,q\ge 1, \\ p+q=k}}\C_{2g}^\bb{Q}(p)\otimes \C_{2g}^\bb{Q}(q)
\]
(See \S \ref{subsec:KKobs} for its definition.)
The map $\delta^{\text{alg}}_k$ arises from the Turaev cobracket, a topological operation which measures self-intersections of curves on a surface.
They showed that 
\[
\Im(\tau^\M_{k,\bb{Q}})\subset \Ker(\delta_k^{\text{alg}})\subset \mf{h}_{g,1}^\bb{Q}(k),
\]
and that $\Ker(\delta_k^{\text{alg}}) \subset \Ker(\Tr_k)$. 

The main purpose of this paper is to compare the two obstructions coming from $c_k$ and from $\delta_k^{\text{alg}}$.
Our first result is as follows.
\begin{theorem}\label{t1}
For each $k \ge 1$ and $2g \geq k+2$, we have $\Ker(c_k) \subset \Ker(\delta_k^{\text{alg}})$. 
\end{theorem}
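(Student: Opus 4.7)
The plan is to construct an explicit $\Sp$-equivariant linear map
$$\Delta_k: \C_{2g}^\bb{Q}(k) \longrightarrow \bigoplus_{\substack{p,q \ge 1 \\ p+q=k}} \C_{2g}^\bb{Q}(p)\otimes \C_{2g}^\bb{Q}(q)$$
and to show that $\delta_k^{\text{alg}} = \Delta_k \circ c_k$. Once this factorization is established the theorem is immediate: any $X \in \mf{h}_{g,1}^\bb{Q}(k)$ with $c_k(X)=0$ satisfies $\delta_k^{\text{alg}}(X) = \Delta_k(c_k(X)) = 0$, whence $\Ker(c_k) \subset \Ker(\delta_k^{\text{alg}})$.

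First I would define $\Delta_k$ as a ``cyclic coproduct'' that cuts a cyclic word at two positions into an ordered pair of cyclic subwords. Concretely, for a representative $h_1 \otimes \cdots \otimes h_k \in H_\bb{Q}^{\otimes k}$,
$$\Delta_k\bigl([h_1 \otimes \cdots \otimes h_k]\bigr) := \sum_{\substack{p,q \ge 1 \\ p+q=k}} \sum_{i \in \Z/k\Z} [h_{i+1}\otimes\cdots\otimes h_{i+p}] \otimes [h_{i+p+1}\otimes\cdots\otimes h_{i+k}],$$
with indices read modulo $k$. Summing over $i \in \Z/k\Z$ makes the expression invariant under cyclic permutation of the input, so $\Delta_k$ descends to the cyclic quotient, and $\Sp$-equivariance is manifest.

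The main task is the identity $\delta_k^{\text{alg}} = \Delta_k \circ c_k$. Reviewing the definitions recalled in \S \ref{subsec:ESobs} and \S \ref{subsec:KKobs}, both $c_k$ and $\delta_k^{\text{alg}}$ arise from the same initial move on $X \in \mf{h}_{g,1}^\bb{Q}(k) \subset H_\bb{Q} \otimes \L_{2g}^\bb{Q}(k+1)$: one expands $X$ along the inclusion $\L_{2g}^\bb{Q}(k+1) \hookrightarrow H_\bb{Q}^{\otimes(k+1)}$ and then performs a single symplectic contraction to produce an element of $H_\bb{Q}^{\otimes k}$. The map $c_k$ merely projects this result to $\C_{2g}^\bb{Q}(k)$, while $\delta_k^{\text{alg}}$, being the algebraic avatar of the Turaev cobracket, continues by a further ``cut and split'' operation at every pair of positions. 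The compatibility of this second cut with the cyclic quotient is precisely the statement that it is implemented by $\Delta_k$.

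The main obstacle will be bookkeeping: precisely matching the signs, multiplicity factors, and the interaction between the Lie bracket expansion in $\L_{2g}^\bb{Q}(k+1)$ and the $\omega$-contraction used in each map. Since $\delta_k^{\text{alg}}$ is originally phrased topologically via the Turaev cobracket, unwinding it into the explicit ``contract and split'' form that manifestly factors through $c_k$ is the crux of the argument. The stable-range hypothesis $2g \ge k+2$ should enter here via the $\Sp$-representation theory of $H_\bb{Q}^{\otimes(k+2)}$: it guarantees that the combinatorial identities obtained on tensors are not collapsed by incidental isomorphisms among $\Sp$-irreducibles and thus faithfully detect the required equality in $\mf{h}_{g,1}^\bb{Q}(k)$.
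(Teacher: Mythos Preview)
Your proposed factorization $\delta_k^{\text{alg}}=\Delta_k\circ c_k$ cannot hold with the $\Delta_k$ you wrote down. Your cyclic coproduct is \emph{symmetric} under the swap $\C_{2g}^\bb{Q}(p)\otimes\C_{2g}^\bb{Q}(q)\to\C_{2g}^\bb{Q}(q)\otimes\C_{2g}^\bb{Q}(p)$ (just relabel $p\leftrightarrow q$ and shift the index $i$), whereas the formula for $\delta_k^{\text{alg}}$ in \S\ref{subsec:KKobs} is explicitly anti-symmetrized. Since $\delta_k^{\text{alg}}$ is not identically zero on $\mf{h}_{g,1}^\bb{Q}(k)$ (it dominates the Morita trace), the two sides cannot agree. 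Underlying this is a misreading of $\delta_k^{\text{alg}}$: it does \emph{not} perform one contraction and then cut at all pairs of positions; rather, the contraction $a_i^*(a_j)$ and the splitting happen at the \emph{same} pair $(i,j)$, so the cut location is tied to the contraction. Thus after passing through $c_k=\Theta_1=\pi_k\circ\Phi_{12}$, which has already chosen the contraction to be at positions $(1,2)$, there is no further cutting to be done that would reproduce $\delta_k^{\text{alg}}$ by your formula. More generally, the mere \emph{existence} of some $\Delta_k$ with $\delta_k^{\text{alg}}=\Delta_k\circ c_k$ is, given the surjectivity of $\Theta_1$ (Theorem~\ref{thm-Satoh}(i)), logically equivalent to the inclusion $\Ker(c_k)\subset\Ker(\delta_k^{\text{alg}})$ you are trying to prove, so asserting the factorization without an independent argument is circular.

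The paper's route is quite different. One first rewrites $\delta_k^{\text{alg}}$ on $\mf{h}_{g,1}^\bb{Q}(k)$ as $(k+2)(\Theta_2+\cdots+\Theta_k)$, using the cyclic invariance of elements of $\mf{h}_{g,1}^\bb{Q}(k)$ inside $H_\bb{Q}^{\otimes k+2}$; here each $\Theta_\ell=\varpi_\ell\circ\Phi_{1,\ell+1}$ contracts positions $1$ and $\ell+1$ and projects the two resulting blocks to cyclic quotients. The heart of the proof is then Theorem~\ref{tC}: $\Ker(\Theta_1)\subset\Ker(\Theta_\ell)$ on $H^*\otimes_\Z\L_n(k+1)$ for every $\ell$. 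This is proved not by a factorization but by invoking the stable-range identification $\Ker(\Theta_1)=\Im\tau'_k$ from Theorem~\ref{thm-Satoh} and then checking, one by one, that the explicit generators of $\Im\tau'_k$ of types $K_1$--$K_4$ in Proposition~\ref{T-Gen} are killed by each $\Theta_\ell$. The hypothesis $2g\ge k+2$ enters precisely here, through Theorem~\ref{thm-Satoh}, and not through any $\Sp$-stability statement about $H_\bb{Q}^{\otimes k+2}$ as you suggested.
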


Our proof is based on a relation between several contraction maps defined on $H^* \otimes_\Z \L_{2g}(k+1)$; see Theorem {\rmfamily \ref{tC}}.
We remark that recently, Alekseev, Kawazumi, Kuno and Naef \cite{AKKN} showed that the above theorem holds for any $g$ in a completely different way.

Our second result gives explicit differences between the two obstructions.

\begin{theorem}\label{t2}
Assume that $g\ge k+1$.
\begin{enumerate}[(i)]
\item For any $k \equiv 1 \ (\text{mod} \ 4)$ such that $k \ge 5$, the $\Sp$-irreducible component $[1^k]$ lies in $\Ker(\delta_k^{\text{alg}})/\Ker(c_k)$.
Thus $\Ker(c_k) \subsetneq \Ker(\delta_k^{\text{alg}})$.
\item For $k=8$, an $\Sp$-irreducible component $[3,1^5]$ appears in $\Ker(\delta_8^{\text{alg}})/\Ker(c_8)$.
\end{enumerate}
\end{theorem}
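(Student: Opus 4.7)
The plan is to apply Schur's lemma: Theorem \ref{t1} yields an $\Sp$-equivariant map
\[
\bar{\delta}_k^{\mathrm{alg}}\colon \mf{h}_{g,1}^{\bb{Q}}(k)/\Ker(c_k)\longrightarrow \bigoplus_{\substack{p,q\ge 1,\\p+q=k}}\C_{2g}^{\bb{Q}}(p)\otimes\C_{2g}^{\bb{Q}}(q)
\]
whose kernel is exactly $\Ker(\delta_k^{\mathrm{alg}})/\Ker(c_k)$. If the isotypic multiplicity of an $\Sp$-irreducible $[\lambda]$ in the target vanishes, then the $[\lambda]$-isotypic part of $\mf{h}_{g,1}^{\bb{Q}}(k)/\Ker(c_k)$ lies in $\Ker(\delta_k^{\mathrm{alg}})/\Ker(c_k)$. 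Combined with the detection of $[1^k]$ in $\mf{h}_{g,1}^{\bb{Q}}(k)/\Ker(c_k)$ by \cite{ES2} and of $[3,1^5]$ by \cite{EE}, it then suffices to rule out these components in the target.

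For part (i), in the stable range $g\ge k+1$, symplectic Schur-Weyl duality yields $\Hom_\Sp([1^k], H_{\bb{Q}}^{\otimes k})\cong M^{(1^k)}$, the one-dimensional sign representation of $S_k$. Since $\C_{2g}^{\bb{Q}}(n)=(H_{\bb{Q}}^{\otimes n})^{\bb{Z}/n}$ in characteristic zero, this gives
\[
\Hom_\Sp\bigl([1^k],\C_{2g}^{\bb{Q}}(p)\otimes\C_{2g}^{\bb{Q}}(q)\bigr)=\bigl(M^{(1^k)}\bigr)^{\bb{Z}/p\times\bb{Z}/q},
\]
on which the generators of $\bb{Z}/p$ and $\bb{Z}/q$ act via cyclic shifts in $S_p$ and $S_q$, contributing signs $(-1)^{p-1}$ and $(-1)^{q-1}$ respectively. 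Their product is $(-1)^{k}$, which equals $-1$ for $k\equiv 1\pmod 4$, so the invariants vanish for every decomposition $p+q=k$ with $p,q\ge 1$, which proves (i).

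For part (ii), the same strategy reduces the claim to $(M^{(3,1^5)})^{\bb{Z}/p\times\bb{Z}/q}=0$ for every $(p,q)\in\{(1,7),(2,6),(3,5),(4,4)\}$ and their transposes. By the branching rule for symmetric groups, $M^{(3,1^5)}|_{S_p\times S_q}=\bigoplus_{\mu\vdash p,\,\nu\vdash q}c^{(3,1^5)}_{\mu,\nu}\,M^\mu\boxtimes M^\nu$ with Littlewood-Richardson multiplicities, and by the Kraskiewicz-Weyman theorem, $\dim(M^\mu)^{\bb{Z}/n}$ equals the number of standard Young tableaux of shape $\mu$ whose major index is divisible by $n$. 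The main obstacle is this finite case analysis: unlike (i), the character of the cyclic shift on $M^{(3,1^5)}$ is not a simple sign, so one must enumerate the nonzero LR coefficients $c^{(3,1^5)}_{\mu,\nu}$ and compute the major-index statistics of the resulting Specht modules to verify that $\sum_{\mu,\nu}c^{(3,1^5)}_{\mu,\nu}\dim(M^\mu)^{\bb{Z}/p}\dim(M^\nu)^{\bb{Z}/q}=0$ in every case.
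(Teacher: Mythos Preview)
Your argument for part~(i) is correct and is essentially the same strategy as the paper's: both show that the $\Sp$-irreducible $[1^k]$ does not occur in the target $\bigoplus_{p+q=k}\C_{2g}^{\Q}(p)\otimes\C_{2g}^{\Q}(q)$, whence Schur's lemma forces the $[1^k]$-component detected in \cite{ES2} to lie in $\Ker(\delta_k^{\mathrm{alg}})/\Ker(c_k)$. The paper phrases the vanishing via the $\GL$--$\Sp$ branching rule and the Littlewood--Richardson rule together with the computation $[\C_{2g}^{\Q}(n):(1^n)]=0$ for $n$ even, while you phrase it as the vanishing of cyclic invariants on the sign representation; these are equivalent.

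For part~(ii), however, your approach breaks down. The vanishing $(M^{(3,1^5)})^{\Z/p\times\Z/q}=0$ that you propose to verify is simply \emph{false}. For instance, take $(p,q)=(1,7)$: restricting $M^{(3,1^5)}$ to $S_1\times S_7$ gives $M^{(1)}\boxtimes(M^{(2,1^5)}\oplus M^{(3,1^4)})$, and since the $7$-cycle acts on the hook module $M^{(a,1^b)}$ by $(-1)^b$, one computes
\[
\dim\bigl(M^{(2,1^5)}\bigr)^{\Z/7}=\tfrac{1}{7}(6+6\cdot(-1))=0,\qquad
\dim\bigl(M^{(3,1^4)}\bigr)^{\Z/7}=\tfrac{1}{7}(15+6\cdot 1)=3,
\]
so $(M^{(3,1^5)})^{\Z/1\times\Z/7}$ is $3$-dimensional. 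In fact the paper explicitly notes that the multiplicity of $[3,1^5]$ in $\C_{2g}^{\Q}(p)\otimes\C_{2g}^{\Q}(8-p)$ is at least~$1$ for every $1\le p\le 7$, so no multiplicity argument in the target can possibly work here.

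The paper therefore proceeds in a completely different way for (ii): it writes down an explicit $\Sp$-maximal vector $v_{[3,1^5]}\theta\zeta_{10}\in\mf{h}_{g,1}^{\Q}(8)$ of weight $(3,1^5)$ whose image under $c_8$ is nonzero (this is the vector used in \cite{EE}), and then checks by a direct computation that $\Theta_\ell(v_{[3,1^5]}\theta\zeta_{10})=0$ for $2\le\ell\le 8$. Since $\delta_8^{\mathrm{alg}}$ is, up to scalar, $\Theta_2+\cdots+\Theta_8$ on $\mf{h}_{g,1}^{\Q}(8)$, this exhibits a copy of $[3,1^5]$ inside $\Ker(\delta_8^{\mathrm{alg}})$ that is not contained in $\Ker(c_8)$. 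Your proposal is missing precisely this explicit-vector computation; the abstract Schur's-lemma reduction cannot replace it.
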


Topologically, each of the components in $\Ker(\delta_k^{\text{alg}})/\Ker(c_k)$ is a component of the $k$th Johnson cokernel, and cannot be detected by the usual Turaev cobracket, but by the framed version of it; see \cite{AKKN} and \cite{Ka}.
By some computer calculations, the first author and Hikoe Enomoto have checked that $[4,1^5]$ also appears in 
$\Ker(\delta_9^{\text{alg}})/\Ker(c_9)$. They conjecture that $[3,1^{k-3}] \ (5 \le k \equiv 0 \pmod{4})$ 
and $[4,1^{k-4}] \ (9 \le k \equiv 1 \pmod{4})$ appear in  $\Ker(\delta_k^{\text{alg}})/\Ker(c_k)$. 
These results and observations suggest that the difference of
$\Ker(\delta_k^{\text{alg}})$ and $\Ker(c_k)$ are not so small.

\section{Andreadakis-Johnson Theory for $\Aut{F_n}$}

In this section, we review the Andreadakis-Johnson filtration and the Johnson homomorphisms of the automorphism groups of free groups.
For details, see \cite{S2} for example.

\subsection{Johnson homomorphisms of $\Aut{F_n}$}
\label{subsec:JAut}

Let $F_n$ be a free group of rank $n \geq 2$ with basis $x_1, \ldots ,x_n$ and let $\Aut{F_n}$ be the automorphism group of $F_n$.
The group $\Aut{F_n}$ acts naturally on the abelianization $H:=\Fab:=F_n/[F_n,F_n]$ of $F_n$.
The kernel of this action is called the IA-automorphism group and denoted by $\IA_n$. 
The basis $x_1,\ldots,x_n$ induces a basis of $H$ and we can identify $\Aut{H}$ with the general linear group $\GL(n,\Z)$.
Thus we have the group extension
\[
1 \to \IA_n \to \Aut{F_n} \to \GL(n,\Z) \to 1.
\]

Let $F_n=\Gamma_n(1)\supset \Gamma_n(2) \supset \cdots$ be the lower central series of $F_n$. Namely it is defined by
$\Gamma_n(1):=F_n$ and $\Gamma_n(k):=[\Gamma_n(k-1),F_n]$ for $k \ge 2$.
It is classically known that the associated graded quotient 
$$\L_n:=\bigoplus_{k \ge 1}\L_n(k), \quad \text{where $\L_n(k):=\Gamma_n(k)/\Gamma_n(k+1)$},$$
has the graded Lie algebra structure induced from
the commutator bracket on $F_n$ and is isomorphic to the free Lie algebra generated by $H=\L_n(1)$.
Moreover, we have the canonical embedding
$$\L_n(k) \hookrightarrow H^{\otimes k}.$$

The group $\Aut{F_n}$ acts naturally on $F_n/\Gamma_n(k+1)$.
The kernel of this action is denoted by $\A_n(k)$.
Then the subgroups $\A_n(k)$ form the descending filtration $\IA_n=\A_n(1) \supset \A_n(2) \supset \cdots$ which we call
the Andreadakis-Johnson filtration.
Andreadakis proved the following theorem.
\begin{theorem}[Andreadakis \cite{And}] \label{tA} \ 
\begin{enumerate}[$(i)$]
\item For any $k,\ell \ge 1$, $\sigma \in \A_n(k)$ and $x \in \Gamma_n(\ell)$, we have $\sigma(x)x^{-1} \in \Gamma_n(k+\ell)$.
\item For any $k,\ell \ge 1$, we have $[\A_n(k),\A_n(\ell)]\subset \A_n(k+\ell)$, namely the Andreadakis-Johnson filtration $\{\A_n(k)\}$ is a descending central filtration of $\IA_n$.
\end{enumerate}
\end{theorem}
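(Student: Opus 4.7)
The strategy is to prove (i) by induction on $\ell \ge 1$ with $k$ fixed, and then to deduce (ii) from (i) by a direct computation modulo $\Gamma_n(k+\ell+1)$.

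For (i), the base case $\ell = 1$ is immediate from the definition of $\A_n(k)$: an element $\sigma \in \A_n(k)$ acts trivially on $F_n/\Gamma_n(k+1)$, which says precisely that $\sigma(x) x^{-1} \in \Gamma_n(k+1)$ for every $x \in F_n = \Gamma_n(1)$. For the inductive step, I assume the claim for $\ell$ and consider $x \in \Gamma_n(\ell+1)$. Because $\Gamma_n(k+\ell+1)$ is normal in $F_n$, a routine check shows that the set $S = \{y \in F_n : \sigma(y)y^{-1} \in \Gamma_n(k+\ell+1)\}$ is a subgroup of $F_n$, so it suffices to prove $[a,b] \in S$ for all $a \in \Gamma_n(\ell)$ and $b \in F_n$. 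Writing $\sigma(a) = a\alpha$ with $\alpha \in \Gamma_n(k+\ell)$ (inductive hypothesis) and $\sigma(b) = b\beta$ with $\beta \in \Gamma_n(k+1)$ (base case), I expand
\[
\sigma([a,b]) = [a\alpha,\, b\beta]
\]
using the standard commutator identities $[xy, z] = x[y,z]x^{-1} \cdot [x, z]$ and $[x, yz] = [x, y] \cdot y[x, z]y^{-1}$. Every correction term produced by this expansion involves a commutator in which at least one entry lies in $\Gamma_n(k+1)$ or $\Gamma_n(k+\ell)$, so by the standard inclusion $[\Gamma_n(p), \Gamma_n(q)] \subset \Gamma_n(p+q)$ together with the normality of $\Gamma_n(k+\ell+1)$, every such correction lies in $\Gamma_n(k+\ell+1)$. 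This gives $[a\alpha, b\beta] \equiv [a,b] \pmod{\Gamma_n(k+\ell+1)}$ and closes the induction.

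For (ii), fix $\sigma \in \A_n(k)$, $\tau \in \A_n(\ell)$, and $y \in F_n$. Write $\sigma(y) = y\,\theta_\sigma(y)$ and $\tau(y) = y\,\theta_\tau(y)$, where $\theta_\sigma(y) \in \Gamma_n(k+1)$ and $\theta_\tau(y) \in \Gamma_n(\ell+1)$. Since $\theta_\tau(y) \in \Gamma_n(\ell+1)$, part (i) gives $\sigma(\theta_\tau(y)) \equiv \theta_\tau(y) \pmod{\Gamma_n(k+\ell+1)}$, and therefore
\[
(\sigma\tau)(y) = \sigma(y)\,\sigma(\theta_\tau(y)) \equiv y\,\theta_\sigma(y)\,\theta_\tau(y) \pmod{\Gamma_n(k+\ell+1)}.
\]
Symmetrically $(\tau\sigma)(y) \equiv y\,\theta_\tau(y)\,\theta_\sigma(y)$. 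Since $\theta_\sigma(y)$ and $\theta_\tau(y)$ commute modulo $\Gamma_n(k+\ell+2) \subset \Gamma_n(k+\ell+1)$, we conclude $(\sigma\tau)(y) \equiv (\tau\sigma)(y) \pmod{\Gamma_n(k+\ell+1)}$. This says $[\sigma, \tau]$ acts trivially on $F_n/\Gamma_n(k+\ell+1)$, i.e., $[\sigma, \tau] \in \A_n(k+\ell)$, as required.

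I expect the main obstacle to be the bookkeeping in the inductive step of (i): one must systematically verify that every correction term appearing in the expansion of $[a\alpha, b\beta]$ actually lands in $\Gamma_n(k+\ell+1)$ rather than in some shallower layer of the lower central series. Once (i) is established, part (ii) reduces to the short computation above, which relies on the observation that commutators between $\Gamma_n(k+1)$ and $\Gamma_n(\ell+1)$ already lie one level deeper than required, in $\Gamma_n(k+\ell+2)$.
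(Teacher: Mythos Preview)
Your argument is correct. Note, however, that the paper does not give its own proof of this theorem: it is stated with attribution to Andreadakis \cite{And} and used as background, so there is no proof in the paper to compare against. What you have written is essentially the classical argument (induction on $\ell$ for (i), then deducing (ii) from (i) via the congruence $(\sigma\tau)(y)\equiv(\tau\sigma)(y)$), and all the steps check out: the set $S$ is indeed a subgroup by normality of $\Gamma_n(k+\ell+1)$; the commutator expansion of $[a\alpha,b\beta]$ produces only correction terms in $[\Gamma_n(k+\ell),F_n]$ or $[\Gamma_n(\ell),\Gamma_n(k+1)]$, both contained in $\Gamma_n(k+\ell+1)$; and in (ii) the passage from $(\sigma\tau)(y)\equiv(\tau\sigma)(y)\pmod{\Gamma_n(k+\ell+1)}$ to $[\sigma,\tau]\in\A_n(k+\ell)$ is justified because $\Gamma_n(k+\ell+1)$ is characteristic, so the reduction $\Aut F_n\to\Aut(F_n/\Gamma_n(k+\ell+1))$ is a group homomorphism.
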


By Theorem \ref{tA} (i), for any $k \ge 1$ we can define the homomorphism 
$$\tilde{\tau}_k:\A_n(k) \to \Hom_\Z(H,\L_n(k+1))$$ 
by
\[
\sigma \mapsto \big( x \mod \Gamma_n(2) \mapsto \sigma(x)x^{-1} \mod \Gamma_n(k+2) \big).
\]
The kernel of $\tilde{\tau}_k$ coincides with $\A_n(k+1)$ and we obtain the injective homomorphism 
\[
\tau_k:\gr^k(\A_n) \hookrightarrow \Hom_\Z(H,\L_n(k+1))
= H^* \otimes_\Z \L_n(k+1),
\]
where $\gr^k(\A_n):=\A_n(k)/\A_n(k+1)$.
We call $\tau_k$ the $k$th Johnson homomorphism of $\Aut{F_n}$.

Next, we define a variant of the Johnson homomorphism $\tau_k$. Let
$
\IA_n=\A'_n(1) \supset \A'_n(2) \supset \cdots
$
be the lower central series of $\IA_n$, and set $\gr^k(\A'_n):=\A'_n(k)/\A'_n(k+1)$.
By Theorem \ref{tA} (ii), we have $\A'_n(k) \subset \A_n(k)$ for any $k$.
Thus we obtain the (not necessarily injective) homomorphism
\[
\tau'_k := \tau_k \circ i_k :\gr^k(\A'_n) \to H^* \otimes_\Z \L_n(k+1),
\]
where the map $i_k:\gr^k(\A'_n) \to \gr^k(\A_n)$ is induced from the inclusion $\A'_n(k) \hookrightarrow \A_n(k)$.

The group $\Aut{F_n}$ acts naturally on each graded quotient $\L_n(k)$.
Moreover, it acts on the normal subgroup $\A_n(k)$ by conjugation, and hence on the graded quotients $\gr^k(\A_n)$ and $\gr^k(\A_n')$.
The action of the subgroup $\IA_n$ on these quotients is trivial, and 
we obtain the well-defined action of the group $\GL(n,\Z)=\Aut{F_n}/\IA_n$ on $\L_n(k)$, $\gr^k(\A_n)$ and $\gr^k(\A_n')$.
The homomorphisms $\tau_k$ and $\tau'_k$ are $\GL(n,\Z)$-equivariant.

In \cite{Sa}, the third author completely determined the structure of the cokernels of $\tau'_k$ in a stable range.
Let $\C_n(k)$ be the quotient module of $H^{\otimes{k}}$ by the action of the cyclic group of order $k$.
Namely,
\[
\C_n(k):=H^{\otimes{k}}/\langle a_1 \otimes a_2 \otimes \cdots \otimes a_k-a_2 \otimes \cdots \otimes a_k\otimes a_1 | a_i \in H \rangle.
\]
One has $\C_n(0)=\Z$ and $\C_n(1)=H$.
Let 
$$\pi_k: H^{\otimes{k}} \to \C_n(k)$$
be the natural projection, and let
$\Phi_{12}: H^* \otimes_\Z H^{\otimes{k+1}} \to H^{\otimes{k}}$ be the contraction map defined by 
\[
\Phi_{12}( f \otimes a_1 \otimes a_2 \otimes \cdots \otimes a_{k+1})
= f(a_1)a_2 \otimes \cdots \otimes a_{k+1},
\]
where $f\in H^*$ and $a_i \in H$.
For simplicity, its restriction to $H^* \otimes_\Z \L_n(k+1)$ is denoted by the same letter: thus we obtain the map
$$\Phi_{12}: H^* \otimes _{\Z} \L_n(k+1) \to H^{\otimes k}.$$

\begin{theorem}[Satoh, \cite{Sa}]\label{thm-Satoh} \ Suppose $k \ge 2$ and $n \ge k+2$. 
\begin{enumerate}[$(i)$]
\item The homomorphism $\pi_k \circ \Phi_{12}: H^* \otimes_{\Z} \L_n(k+1) \rightarrow \C_n(k)$ is surjective.
\item We have $\Im\tau'_{k}=\Ker(\pi_k \circ \Phi_{12})$, namely $\Coker(\tau'_{k}) \cong \C_n(k)$.
\end{enumerate}
\end{theorem}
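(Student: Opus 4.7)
The plan is to prove (i) and (ii) separately, treating the second as the deeper statement.

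For (i), I would exhibit explicit preimages. Fix a basis $x_1, \ldots, x_n$ of $H$ with dual basis $x_1^*, \ldots, x_n^*$. Since $n \ge k+2$, for any indices $i_1, \ldots, i_k \in \{1, \ldots, n\}$ there is an index $j$ outside $\{i_1, \ldots, i_k\}$, and I set
\[
\xi := x_j^* \otimes [x_{i_1}, [x_{i_2}, [\cdots [x_{i_{k-1}}, [x_{i_k}, x_j]] \cdots]]] \in H^* \otimes_\Z \L_n(k+1).
\]
Expanding this right-normed Lie monomial in $H^{\otimes(k+1)}$ via $[a,b] = a \otimes b - b \otimes a$, a direct induction on $k$ shows that the only tensor term whose first factor is $x_j$ is $(-1)^k \, x_j \otimes x_{i_k} \otimes \cdots \otimes x_{i_1}$; all other terms start with some $x_{i_\ell} \neq x_j$ and are killed by $x_j^*$. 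Hence $\Phi_{12}(\xi) = (-1)^k \, x_{i_k} \otimes \cdots \otimes x_{i_1}$, whose $\pi_k$-image covers every cyclic class of $\C_n(k)$ as the tuple varies.

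For the inclusion $\Im \tau'_k \subseteq \Ker(\pi_k \circ \Phi_{12})$ in (ii), I would argue by induction on $k \ge 2$. Since $\{\A'_n(k)\}$ is the lower central series of $\IA_n$, every class of $\gr^k(\A'_n)$ is a sum of commutators $[\bar{\sigma}_1, \bar{\sigma}_2]$ with $\bar{\sigma}_i \in \gr^{k_i}(\A'_n)$ and $k_1 + k_2 = k$. Combined with the Andreadakis-Johnson compatibility $\tau_k([\sigma_1, \sigma_2]) = [\tau_{k_1}(\sigma_1), \tau_{k_2}(\sigma_2)]$ in the Lie algebra of positive-degree derivations of $\L_n$, the claim reduces to the identity
\[
\pi_k \circ \Phi_{12}([D_1, D_2]) = 0, \qquad D_i \in H^* \otimes_\Z \L_n(k_i+1).
\]
Writing $D_i = f_i \otimes \ell_i$ (simple tensors), the derivation bracket is $[D_1, D_2] = f_2 \otimes D_1(\ell_2) - f_1 \otimes D_2(\ell_1)$; applying $\Phi_{12}$ and collecting terms modulo cyclic rotations, the two contributions pair up with opposite signs and cancel. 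The base case $k=2$ is this identity for $D_i \in H^* \otimes_\Z \L_n(2)$, verified by a direct calculation on simple tensors.

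For the reverse inclusion $\Ker(\pi_k \circ \Phi_{12}) \subseteq \Im \tau'_k$, the plan is to realize each kernel element as an iterated commutator of Magnus IA-generators. The degree-$1$ images $\tau_1(K_{rs}) = x_r^* \otimes [x_s, x_r]$ and $\tau_1(K_{rst}) = x_r^* \otimes [x_s, x_t]$ span $\Im \tau_1$, and their iterated Lie brackets in the positive-degree derivation algebra fill out $\Im \tau'_k$ in degree $k$. A $\GL(n, \Z)$-equivariant character calculation, using the Schur-Weyl decomposition of $H^{\otimes k}$ together with Witt's formula for $\dim \L_n(k+1)$, matches this image with $\Ker(\pi_k \circ \Phi_{12})$. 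The stability hypothesis $n \ge k+2$ enters here, providing enough free indices for the commutator constructions. This reverse inclusion is the main obstacle: while the forward direction is a clean algebraic identity about traces of commutators, producing enough iterated IA-commutators to fill $\Ker(\pi_k \circ \Phi_{12})$ is a delicate combinatorial problem, and the description $\Coker(\tau'_k) \cong \C_n(k)$ fails outside the stable range.
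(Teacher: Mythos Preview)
This theorem is not proved in the present paper; it is quoted from \cite{Sa}. The only trace of Satoh's argument here is Proposition~\ref{T-Gen}, whose proof records the strategy for the hard inclusion $\Ker(\pi_k\circ\Phi_{12})\subset\Im\tau'_k$: one exhibits four explicit families $K_1$--$K_4$ of elements of $H^*\otimes_\Z\L_n(k+1)$, shows each lies in $\Im\tau'_k$ by realizing it as the image of an iterated commutator of Magnus generators of $\IA_n$, and then proves by a direct combinatorial reduction that these families already generate $\Ker(\pi_k\circ\Phi_{12})$ over $\Z$.

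Your arguments for (i) and for the forward inclusion in (ii) are correct; the latter is the standard ``trace of a commutator of derivations vanishes'' computation. The genuine gap is your plan for the reverse inclusion. A character or dimension count cannot close it as written: such arguments are rational, so at best they would yield $\Im\tau'_{k,\Q}=\Ker(\pi_k\circ\Phi_{12})_\Q$, not the integral statement of the theorem. More fundamentally, a dimension match presupposes an independent formula for $\dim_\Q\Im\tau'_{k,\Q}$, i.e.\ for the degree-$k$ piece of the Lie subalgebra of the positive-degree derivation algebra generated by $\Im\tau_1$, and no such formula is available a priori; this dimension is precisely the unknown. Satoh's proof in \cite{Sa} avoids this circularity by working constructively with the explicit elements $K_1$--$K_4$, and the stability bound $n\ge k+2$ enters exactly in showing that those elements span the kernel.
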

\noindent
Formulas of the $\GL$-irreducible decompositions of $\C_n^\Q(k)$ and $\Im(\tau'_{k,\Q})$ are given in \cite{ES1}.

\begin{rem}
Recently Darn\'{e} \cite{D} showed that the natural map $i_k:\gr^k(\A'_n) \to \gr^k(\A_n)$ is surjective
for $n \ge k+2$. This means that the stable $k$th cokernel $\mathrm{Coker}(\tau_k)$ coincides with $\Coker(\tau'_{k})$.
Namely, in the stable range, the Johnson cokernels for $\Aut{F_n}$ are completely determined over $\Z$.
\end{rem}

\subsection{A generating set of $\Im\tau_{k}'$}

Let $e_1, \ldots ,e_n$ be the standard basis of $H=\Fab$ induced from the basis $x_1, \ldots ,x_n$ of $F_n$, and $e_1^*, \ldots, e_n^*$ the dual basis of $H^*$.
For any $a_1, a_2, \ldots, a_k \in H$, we set
\[ [a_1, a_2, \ldots, a_k] := [ \cdots [[a_1, a_2], a_3], \ldots, ], a_k] \in \mathcal{L}_n(k). \]
This is called a $k$-simple commutator.
We have a generating set of $\Im\tau_{k}'$ as a $\Z$-module in a stable range.

\begin{prop}\label{T-Gen}
Suppose $k \ge 2$ and $n \ge k+2$.
Then the image of $\tau_{k}'$ is generated as a $\Z$-module by the following four types of elements in $H^* \otimes_{\Z} \L_n(k+1)$:
\begin{enumerate}[$(i)$]
\item[$(K_1)$] $e_i^* \otimes [e_{i_1},e_{i_2}, \ldots ,e_{i_{k+1}}]$
for any $1 \le i, i_1, \ldots , i_{k+1} \le n$ such that $i_1, \ldots , i_{k+1} \neq i$.
\item[$(K_2)$] $e_i^* \otimes [e_{i_1}, e_{i_2}, \ldots, e_{i_k}, e_i]$
for any $1 \le i, i_1, \ldots , i_k \le n$ such that $i_1, \ldots , i_k \neq i$.
\item[$(K_3)$] $e_i^* \otimes [e_i,e_{i_1}, \ldots ,e_{i_k}]-e_j^* \otimes [e_j,e_{i_k}, e_{i_1}, \ldots, e_{i_{k-1}}]$
for any $1 \le i, j, i_1, \ldots , i_k \le n$ such that $i,j \neq i_1, \ldots ,i_k$. (possibly $i=j$.) 
\item[$(K_4)$] $\displaystyle e_i^* \otimes [e_{i_1},e_{i_2}, \ldots ,e_{i_{k+1}}] -\sum_{j=1}^{k+1}\delta_{i,i_j}e_m^*\otimes [e_{i_1}, \ldots ,e_{i_{j-1}},e_m,e_{i_{j+1}}, \ldots ,e_{i_k},e_{i_{k+1}}]$
for any $1 \le i, m, i_1, \ldots , i_{k+1} \le n$ such that $i=i_j$ for some $1 \le j \le k+1$ and $m \neq i_1, \ldots ,i_{k+1}$.
\end{enumerate}
\end{prop}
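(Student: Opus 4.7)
By Theorem~\ref{thm-Satoh}(ii) we have $\Im\tau'_k=\Ker(\pi_k\circ\Phi_{12})$ whenever $n\ge k+2$, so the task splits into (a) checking that each of $(K_1)$--$(K_4)$ lies in $\Ker(\pi_k\circ\Phi_{12})$, and (b) checking that they generate it as a $\Z$-module.

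For (a) one computes directly in the embedding $\L_n(k+1)\hookrightarrow H^{\otimes(k+1)}$ given by $[a,b]=a\otimes b-b\otimes a$. In $(K_1)$, every tensor monomial appearing in $[e_{i_1},\ldots,e_{i_{k+1}}]$ is a rearrangement of $e_{i_1}\otimes\cdots\otimes e_{i_{k+1}}$; since $i\notin\{i_1,\ldots,i_{k+1}\}$, none of these begin with $e_i$, and $\Phi_{12}(e_i^*\otimes-)=0$. In $(K_2)$, the recursion $[B,e_i]=B\otimes e_i-e_i\otimes B$ applied to $B=[e_{i_1},\ldots,e_{i_k}]$ shows that $\Phi_{12}(e_i^*\otimes-)=-B\in\L_n(k)$; since for $k\ge 2$ every element of $\L_n(k)$ vanishes in the cyclic quotient $\C_n(k)$ (an elementary consequence of antisymmetry of $[\cdot,\cdot]$ combined with cyclic identification), this is killed by $\pi_k$. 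In $(K_3)$, induction on the recursion $[X,e_{i_m}]=X\otimes e_{i_m}-e_{i_m}\otimes X$ shows that the unique $e_i$-initial monomial of $[e_i,e_{i_1},\ldots,e_{i_k}]$ is $e_i\otimes e_{i_1}\otimes\cdots\otimes e_{i_k}$, so the two summands of $(K_3)$ map under $\Phi_{12}$ to $e_{i_1}\otimes\cdots\otimes e_{i_k}$ and to its cyclic shift $e_{i_k}\otimes e_{i_1}\otimes\cdots\otimes e_{i_{k-1}}$, whose difference lies in $\Ker(\pi_k)$. In $(K_4)$, because $m\notin\{i_1,\ldots,i_{k+1}\}$, the tensor expansion of $[e_{i_1},\ldots,e_m,\ldots,e_{i_{k+1}}]$ (with $e_m$ at position $j$) is obtained from that of $[e_{i_1},\ldots,e_{i_{k+1}}]$ by substituting $e_i\mapsto e_m$ at position $j$; the $e_i$-initial monomials of the first bracket originating from position $j$ correspond bijectively under this substitution to the $e_m$-initial monomials of the second, and summing over all $j$ with $i_j=i$ the difference already lies in $\Ker(\Phi_{12})$.

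For (b), set $N:=\langle(K_1),\ldots,(K_4)\rangle_\Z\subset H^*\otimes_\Z\L_n(k+1)$. The plan is to reduce each spanning element $e_i^*\otimes[e_{i_1},\ldots,e_{i_{k+1}}]$ modulo $N$ to a normal form $e_i^*\otimes[e_i,e_{j_1},\ldots,e_{j_k}]$ with $j_\ell\neq i$, as follows. If the bracket contains no $e_i$, or contains $e_i$ only in its last slot, the element is already in $N$ by $(K_1)$ or $(K_2)$. Otherwise, pick a fresh index $m\notin\{i,i_1,\ldots,i_{k+1}\}$, which exists because $n\ge k+2$, and apply $(K_4)$ to move each occurrence of $e_i$ out of the bracket, replacing it by $e_m$ and the tag by $e_m^*$; then use the Jacobi identity and antisymmetry inside $\L_n(k+1)$ to bring $e_m$ to the first slot of each resulting bracket. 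The induced map $\bar\pi\colon(H^*\otimes_\Z\L_n(k+1))/N\to\C_n(k)$ then sends the class of $e_i^*\otimes[e_i,e_{j_1},\ldots,e_{j_k}]$ to the cyclic class of $e_{j_1}\otimes\cdots\otimes e_{j_k}$, and $(K_3)$ is exactly the requirement that representatives differing by a cyclic shift have equal images. Hence $\bar\pi$ is injective, and together with surjectivity from Theorem~\ref{thm-Satoh}(i) this yields $N=\Ker(\pi_k\circ\Phi_{12})$.

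The hard part is (b): orchestrating the $(K_4)$ substitutions together with repeated use of the Jacobi identity and antisymmetry in $\L_n(k+1)$ so that every $e_i^*\otimes X$ lands cleanly in the normal form, and tracking the combinatorics carefully when $e_i$ occurs in the bracket with multiplicity greater than one. The stable-range hypothesis $n\ge k+2$ is used precisely to guarantee a fresh index $m\notin\{i_1,\ldots,i_{k+1}\}$ at each application of $(K_4)$.
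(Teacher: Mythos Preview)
Your approach matches the paper's: both reduce the claim, via Theorem~\ref{thm-Satoh}(ii), to showing that $(K_1)$--$(K_4)$ lie in and generate $\Ker(\pi_k\circ\Phi_{12})$. The paper simply defers both checks to \cite{Sa}, so your part~(a) is in fact more detailed than the paper's own treatment, and it is correct.

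For part~(b), your outline is right but two steps are not yet justified. First, ``use Jacobi and antisymmetry to bring $e_m$ to the first slot'' needs an argument: one shows that the $\Z$-span of left-nested brackets $[e_m,y_1,\ldots,y_\ell]$ with $y_j\neq e_m$ is closed under $\ad_X$ for every $X$ in the sub-Lie-algebra generated by $\{e_j:j\neq m\}$ (induction on $\deg X$ using $\ad_{[Y,Z]}=[\ad_Y,\ad_Z]$), and then applies this to $[e_{i_1},\ldots,e_{i_{j-1}},e_m]=\ad_{[e_{i_1},\ldots,e_{i_{j-1}}]}(e_m)$ followed by further bracketing with $e_{i_{j+1}},\ldots,e_{i_{k+1}}$. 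Second, the sentence ``hence $\bar\pi$ is injective'' does not follow from what precedes it: knowing that every class is a combination of normal forms and that $(K_3)$ identifies cyclic shifts is not enough. What is missing is a section $\psi:\C_n(k)\to(H^*\otimes_\Z\L_n(k+1))/N$ sending $\pi_k(z_1\otimes\cdots\otimes z_k)$ to the class of $e_m^*\otimes[e_m,z_1,\ldots,z_k]$ for a fresh $m$. One must check that this class is independent of the fresh $m$ (apply $(K_4)$ with $i=m$, $i_1=m$, $i_2=z_1,\ldots,i_{k+1}=z_k$) and invariant under cyclic shifts (by $(K_3)$), so that $\psi$ is well defined. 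Then $\bar\pi\circ\psi=\id_{\C_n(k)}$, and your reduction procedure shows $\psi$ is surjective; hence $\bar\pi$ and $\psi$ are mutually inverse and $N=\Ker(\pi_k\circ\Phi_{12})$.
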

\begin{proof}
It is easily seen that these elements belong to $\Ker(\pi_k \circ \Phi_{12})$.
In \S 3.2 in \cite{Sa}, it was shown that these elements belong to $\Im\tau_{k}'$.
Furthermore, by the arguments in the process of the proof of $\Im\tau'_{k} \supset \Ker(\pi_k \circ \Phi_{12})$,
it turns out that the above elements generate $\Ker(\pi_k \circ \Phi_{12})$ as a $\Z$-module.
Since $\Ker(\pi_k \circ \Phi_{12}) = \Im\tau'_{k}$, we obtain the required result.
\end{proof}

We remark that each of $\gr^k(\A'_n)$ is finitely generated since $\IA_n$ is finitely generated.
We should also remark that due to a recent work by Church, Ershov and Putman \cite{CEP}, each of $\mathcal{A}_n'(k)$ and $\mathcal{A}_n(k)$
is finitely generated in a stable range. However it seems to be still open to describe an explicit finite generating system of them.

\subsection{Contractions and $\Im\tau'_{k}$}

\label{sec:Cont}

We generalize the contraction map $\Phi_{12}$ in \S \ref{subsec:JAut}.
For each $1\le \ell \le k+1$, we consider the contraction map
$\Phi_{1,\ell+1}: H^* \otimes_\Z H^{\otimes{k+1}} \to H^{\otimes{k}}$ defined by the formula
\[
\Phi_{1,\ell+1}( f \otimes a_1 \otimes \cdots \otimes a_{k+1} )
= f(a_\ell) a_1 \otimes \cdots \otimes a_{\ell-1} \otimes a_{\ell+1} \otimes \cdots \otimes a_{k+1},
\]
where $f\in H^*$ and $a_i \in H$.
We denote its restriction to $H^* \otimes_\Z \L_n(k+1)$
by the same letter: thus we obtain the map
$$\Phi_{1,\ell+1}: H^* \otimes_\Z \L_n(k+1) \to H^{\otimes k}.$$ 

For $y \in H$ and $e_{i_j}$ for $1 \leq j \leq k$, 
in order to describe the expansion of the simple commutator $[y, e_{i_1}, \ldots, e_{i_k}]$ in $H^{\otimes k}$,
we introduce the following notation.
For an ordered subset $S=(j_1, j_2, \ldots, j_l)$ of the ordered set $(i_1, i_2, \ldots, i_k)$, define
\[\begin{split}
   e_{\overrightarrow{S}} := e_{j_1} \otimes e_{j_2} \otimes \cdots \otimes e_{j_l}, \hspace{1em}
   e_{\overleftarrow{S}} := e_{j_l} \otimes e_{j_{l-1}} \otimes \cdots \otimes e_{j_1}.
  \end{split}\]
Let $S^c$ be the ordered complement of $S$. For example, if $S$ is the ordered subset $(2,4,5)$ of $(1, 2, \ldots, 6)$, we have
$S^c =(1,3,6)$ and
\[\begin{split}
   e_{\overrightarrow{S}} & =e_{2} \otimes e_{4} \otimes e_{5}, \hspace{1em} e_{\overleftarrow{S}}=e_{5} \otimes e_{4} \otimes e_{2}, \\
   e_{\overrightarrow{S^c}} & =e_{1} \otimes e_{3} \otimes e_{6}, \hspace{1em} e_{\overleftarrow{S^c}}=e_{6} \otimes e_{3} \otimes e_{1}.
  \end{split}\]
Then, we have
\[ [y, e_{i_1}, \ldots, e_{i_k}] = \sum_S (-1)^{|S|} e_{\overleftarrow{S}} \otimes y \otimes e_{\overrightarrow{S^c}} \]
where $S$ ranges over all ordered subset of $(i_1, i_2, \ldots, i_k)$, and $|S|$ denotes the number of elements in $S$.
We can easily obtain the following lemma.
\begin{lemma}\label{L-HKT}
As notation above,
for any $1 \leq \ell \leq k+1$, if $i \neq i_1, i_2, \ldots, i_k$ then we have
\[ \Phi_{1,\ell+1}(e_i^* \otimes [e_i, e_{i_1}, \ldots, e_{i_k}]) = \sum_{\substack{S \subset (i_1, i_2, \ldots, i_k)\\[1pt] |S|=\ell-1}} (-1)^{l-1} e_{\overleftarrow{S}} \otimes e_{\overrightarrow{S^c}}. \]
\end{lemma}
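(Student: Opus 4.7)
The plan is to obtain the lemma as a direct application of the expansion formula
\[ [e_i, e_{i_1}, \ldots, e_{i_k}] = \sum_{S} (-1)^{|S|} e_{\overleftarrow{S}} \otimes e_i \otimes e_{\overrightarrow{S^c}} \]
displayed just above the statement, where $S$ ranges over all ordered subsets of $(i_1, \ldots, i_k)$. Tensoring on the left by $e_i^*$ and applying $\Phi_{1,\ell+1}$ commutes with the sum, so it suffices to analyze a single summand
\[ \Phi_{1,\ell+1}\bigl( e_i^* \otimes e_{\overleftarrow{S}} \otimes e_i \otimes e_{\overrightarrow{S^c}} \bigr). \]

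By definition of $\Phi_{1,\ell+1}$, this contraction pairs $e_i^*$ with the $\ell$-th factor of $e_{\overleftarrow{S}} \otimes e_i \otimes e_{\overrightarrow{S^c}}$. The hypothesis $i \neq i_1, \ldots, i_k$ means that $e_i$ is the unique basis vector in this tensor that is not annihilated by $e_i^*$; since $e_i$ occupies position $|S|+1$, the summand is nonzero precisely when $|S| = \ell - 1$, in which case $e_i^*(e_i) = 1$ and the surviving tensor is $e_{\overleftarrow{S}} \otimes e_{\overrightarrow{S^c}} \in H^{\otimes k}$.

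Restoring the sign $(-1)^{|S|} = (-1)^{\ell-1}$ from the expansion and summing over all ordered subsets $S \subset (i_1, \ldots, i_k)$ of size $\ell - 1$ yields the claimed identity. There is no real obstacle; the only point requiring care is bookkeeping: keeping the convention of $\Phi_{1,\ell+1}$ (that it pairs $e_i^*$ with the $\ell$-th factor of the Lie part) consistent with the position $|S|+1$ of $e_i$ inside $e_{\overleftarrow{S}} \otimes e_i \otimes e_{\overrightarrow{S^c}}$, so that the equation $|S|+1 = \ell$ pins down the contributing subsets and the sign comes out as $(-1)^{\ell-1}$.
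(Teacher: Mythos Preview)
Your proposal is correct and is exactly the argument the paper has in mind: the paper states the expansion formula for $[y,e_{i_1},\ldots,e_{i_k}]$ and then simply says ``We can easily obtain the following lemma'' without further proof. Your write-up supplies precisely the omitted bookkeeping, tracking the position $|S|+1$ of $e_i$ against the index $\ell$ in $\Phi_{1,\ell+1}$ under the hypothesis $i\neq i_1,\ldots,i_k$.
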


For any $1\leq \ell \leq k+1$, 
define the homomorphism 
$$\varpi_\ell:H^{\otimes{k}} \to \C_n(\ell-1) \otimes \C_n(k-\ell+1)$$
by 
\[
\varpi_\ell(
a_1 \otimes \cdots \otimes a_{k+1})= \pi_{\ell-1}(a_1 \otimes \cdots \otimes a_{\ell-1}) \otimes \pi_{k-\ell+1}(a_\ell \otimes \cdots \otimes a_k),
\]
and set
\[
\Theta_\ell:=\varpi_\ell \circ \Phi_{1,\ell+1}:H^* \otimes_\Z H^{\otimes k+1} \to \C_n(\ell-1) \otimes \C_n(k-\ell+1). \]
We denote the restriction of this map to $H^* \otimes_\Z \L_n(k+1)$ by the same letter: 
$$\Theta_\ell: H^* \otimes_\Z \L_n(k+1) \to \C_n(\ell-1) \otimes \C_n(k-\ell+1).$$

\begin{theorem}\label{tC}
Suppose $k\ge 2$ and $n \geq k+2$.
For any $1 \leq \ell \leq k+1$, we have 
\[
\Ker(\Theta_1) \subset \Ker(\Theta_\ell)
\]
in $H^* \otimes_{\Z} \L_n(k+1)$. 
\end{theorem}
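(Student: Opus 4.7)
The plan is to identify $\Ker(\Theta_1)$ with a generating set and then to check that $\Theta_\ell$ annihilates each generator. Since $\C_n(0) = \Z$, the map $\varpi_1$ reduces to $\pi_k$, so $\Theta_1 = \pi_k \circ \Phi_{12}$ and Theorem~\ref{thm-Satoh}(ii) gives $\Ker(\Theta_1) = \Im\tau'_k$. By Proposition~\ref{T-Gen}, this image is $\Z$-spanned by the four families $(K_1),(K_2),(K_3),(K_4)$, so it suffices to show $\Theta_\ell$ kills each of these for $1 \le \ell \le k+1$.

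The cases $(K_1)$, $(K_2)$, and $(K_4)$ should be handled by direct bookkeeping. For $(K_1)$ the contraction $\Phi_{1,\ell+1}(e_i^* \otimes [e_{i_1},\ldots,e_{i_{k+1}}])$ is identically zero, because $e_i^*$ kills every basis vector occurring in the commutator's tensor expansion. For $(K_2)$, the rewriting $[e_{i_1},\ldots,e_{i_k},e_i] = \alpha \otimes e_i - e_i \otimes \alpha$ with $\alpha := [e_{i_1},\ldots,e_{i_k}] \in \L_n(k)$ shows that $\Phi_{1,\ell+1}$ vanishes for $1 < \ell < k+1$ and equals $\pm \alpha$ for $\ell \in \{1, k+1\}$; since any commutator in $\L_n(k)$ projects to zero in $\C_n(k)$ (cyclically permuting by the length of the first factor identifies $\alpha \otimes \beta$ with $\beta \otimes \alpha$), we conclude that $\Theta_\ell$ kills $(K_2)$. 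For $(K_4)$, I plan to prove the stronger statement that $\Phi_{1,\ell+1}$ already annihilates the element: expanding both commutators as signed sums over permutations $\sigma$ and collecting the terms surviving the contraction, the $\delta_{i,i_j}$-sum reproduces the first summand exactly, using crucially that $m$ does not appear among $i_1,\ldots,i_{k+1}$, so that the surviving contributions from the $j$-th summand are precisely those $\sigma$ with $\sigma(\ell)=j$.

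The main obstacle is $(K_3)$. By Lemma~\ref{L-HKT}, the images of the two summands under $\Phi_{1,\ell+1}$ are signed sums indexed by ordered $(\ell{-}1)$-element subsets of $(i_1,\ldots,i_k)$ and of its cyclic shift $(i_k,i_1,\ldots,i_{k-1})$, respectively. The plan is to match the two indexing sets via the bijection $T \mapsto T'$ obtained by shifting position indices by $+1$ modulo $k$; for each $T$, the pair $e_{\overleftarrow{S_T}} \otimes e_{\overrightarrow{S_T^c}}$ and $e_{\overleftarrow{S'_{T'}}} \otimes e_{\overrightarrow{S'^{c}_{T'}}}$ differs by a single cyclic rotation in exactly one tensor factor: in the first factor if $k \in T$ (the element $e_{i_k}$ moves between the two ends of the reversed tuple) and in the second factor if $k \notin T$ (the element $e_{i_k}$ moves between the two ends of the complement tuple). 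The cyclic symmetries of $\C_n(\ell-1)$ and $\C_n(k-\ell+1)$ absorb this rotation after applying $\varpi_\ell$, yielding term-by-term equality in $\C_n(\ell-1) \otimes \C_n(k-\ell+1)$ and hence $\Theta_\ell(K_3) = 0$.
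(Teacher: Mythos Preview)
Your proposal is correct and follows essentially the same route as the paper's proof: both reduce to Proposition~\ref{T-Gen} via $\Ker(\Theta_1)=\Im\tau'_k$ and then verify $\Theta_\ell$ kills each of $(K_1)$--$(K_4)$, handling $(K_1)$ trivially, $(K_2)$ via $\L_n(k)\subset\Ker\pi_k$, $(K_4)$ by showing $\Phi_{1,\ell+1}$ itself vanishes, and $(K_3)$ via Lemma~\ref{L-HKT} together with the case split on whether $i_k$ lies in the chosen subset, which is exactly your ``single cyclic rotation in one tensor factor'' observation.
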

\begin{proof}
By Proposition {\rmfamily \ref{T-Gen}} and $\Ker(\Theta_1) = \Ker(\pi_k \circ \Phi_{12}) = \Im\tau'_{k}$, it suffices to show that
all the generators of type $K_1$, $K_2$, $K_3$ and $K_4$ of $\Im\tau'_{k}$ belong to $\Ker(\Theta_\ell)$ for any $1 \le \ell \le k+1$.
Clearly, generators of type $K_1$ belong to $\Ker(\Theta_\ell)$.
Consider a generator of type $K_2$. We have
\[\begin{split}
 \Phi_{1,\ell+1} & (e_i^* \otimes ([e_{i_1}, e_{i_2}, \ldots, e_{i_k}] \otimes e_i - e_i \otimes [e_{i_1}, e_{i_2}, \ldots, e_{i_k}])) \\
 &= \begin{cases}
      0 \hspace{1em} & \mathrm{if} \hspace{1em} \ell \neq 1, k+1, \\
      \pm [e_{i_1}, e_{i_2}, \ldots, e_{i_k}] & \mathrm{if} \hspace{1em} \ell = 1, k+1.
     \end{cases}
\end{split}\]
This shows that generators of type $K_2$ belong to $\Ker(\Theta_\ell)$,
since $\L_n(k)$ is in the kernel of the projection $\pi_k: H^{\otimes k} \to \C_n(k)$. 

For a generator
\[ X=e_i^* \otimes [e_i,e_{i_1}, \ldots ,e_{i_k}]-e_j^* \otimes [e_j,e_{i_k}, e_{i_1}, \ldots, e_{i_{k-1}}] \]
of type $K_3$. By Lemma {\rmfamily \ref{L-HKT}}, we have
\[ \Phi_{1,\ell+1}(X)=(-1)^{\ell -1} 
    \begin{bmatrix} \displaystyle \sum_{\substack{S \subset (i_1, i_2, \ldots, i_k)\\[1pt] |S|=\ell-1}} e_{\overleftarrow{S}} \otimes e_{\overrightarrow{S^c}}
    \,\, - \sum_{\substack{T \subset (i_k, i_1, \ldots, i_{k-1})\\[1pt] |T|=\ell-1}} e_{\overleftarrow{T}} \otimes e_{\overrightarrow{T^c}}
   \end{bmatrix}. \]
Here the first sum is written as
\[ \sum_{\substack{i_k \in S \\[1pt] |S|=\ell-1}} e_{\overleftarrow{S}} \otimes e_{\overrightarrow{S^c}}
  + \sum_{\substack{i_k \not\in S \\[1pt] |S|=\ell-1}} e_{\overleftarrow{S}} \otimes e_{\overrightarrow{S^c}}, \]
and the second sum is written as
\[ \sum_{\substack{i_k \in T \\[1pt] |T|=\ell-1}} e_{\overleftarrow{T}} \otimes e_{\overrightarrow{T^c}}
  + \sum_{\substack{i_k \not\in T \\[1pt] |T|=\ell-1}} e_{\overleftarrow{T}} \otimes e_{\overrightarrow{T^c}}. \]
Then we have
\[\begin{split}
  \sum_{\substack{i_k \in S \\[1pt] |S|=\ell-1}} & e_{\overleftarrow{S}} \otimes e_{\overrightarrow{S^c}}
    - \sum_{\substack{i_k \in T \\[1pt] |T|=\ell-1}} e_{\overleftarrow{T}} \otimes e_{\overrightarrow{T^c}} \\
  & = \sum_{\substack{S_0 \subset (i_1, \ldots, i_{k-1}) \\[1pt] |S_0|=\ell-2}} e_{i_k} \otimes e_{\overleftarrow{S_0}} \otimes e_{\overrightarrow{S_0^c}}
    - \sum_{\substack{T_0 \subset (i_1, \ldots, i_{k-1}) \\[1pt] |T_0|=\ell-2}} e_{\overleftarrow{T_0}} \otimes e_{i_k} \otimes e_{\overrightarrow{T_0^c}} \\
  & \overset{\varpi_{\ell}}{\longmapsto} 0
 \end{split}\]
since $\pi_{\ell-1}(e_{i_k} \otimes e_{\overleftarrow{S_0}}) = \pi_{\ell-1}(e_{\overleftarrow{S_0}} \otimes e_{i_k})$.
Similarly, the remaining terms
\[
\sum_{\substack{i_k \not\in S \\[1pt] |S|=\ell-1}} e_{\overleftarrow{S}} \otimes e_{\overrightarrow{S^c}}
-
\sum_{\substack{i_k \not\in T \\[1pt] |T|=\ell-1}} e_{\overleftarrow{T}} \otimes e_{\overrightarrow{T^c}}
\]
are annhilated by $\varpi_{\ell}$.
This shows that $\Phi_{1, \ell+1}(X)$ is in the kernel of $\varpi_\ell$ 
and thus generatos of type $K_3$ belong to $\mathrm{Ker}(\Theta_\ell)$ for any $1 \le \ell \le k+1$.

Finally, consider a generator
\[
X=e_i^* \otimes [e_{i_1},e_{i_2}, \ldots ,e_{i_{k+1}}]
   -\sum_{j=1}^{k+1}\delta_{i,i_j}e_m^*\otimes [e_{i_1}, \ldots ,e_{i_{j-1}},e_m,e_{i_{j+1}}, \ldots ,e_{i_k},e_{i_{k+1}}]
\]
of type $K_4$.
Assume $i_{j_1}= \cdots = i_{j_t} =i$. 
For any $1 \leq \ell \leq k+1$, we can calculate $\Phi_{1, \ell+1}(e_i^* \otimes [e_{i_1},e_{i_2}, \ldots ,e_{i_{k+1}}])$
by taking all contractions between $e_i^*$ and $e_{i_{j_s}}$ for $1 \leq s \leq t$. In particular, the contribution
of the contraction between $e_i^*$ and $e_{i_{j_s}}$ for a fixed $s$ is equal to that of
\[ \Phi_{1, \ell+1}(e_m^*\otimes [e_{i_1}, \ldots ,e_{i_{j_s -1}},e_m,e_{i_{j_s +1}}, \ldots ,e_{i_k},e_{i_{k+1}}]). \]
Therefore we see that $\Phi_{1,\ell+1}(X)=0$.
This completes the proof of Theorem {\rmfamily \ref{tC}}.
\end{proof}

\section{Structures of the Johnson Cokernels of $\M_{g,1}$}

In this section, we turn our attention to the mapping class group $\M_{g,1}$ and prove Theorem \ref{t1}
in Introduction.

\subsection{Johnson homomorphisms for $\M_{g,1}$}

\label{sec:JMG}

We review the Johnson homomorphisms and their cokernels of $\M_{g,1}$, following \cite{ES2}.
Given a base point on the boundary, 
the fundamental group $\pi_1(\Sigma_{g,1})$ of the surface $\Sigma_{g,1}$ is a free group $F_{2g}$ of rank $2g$.
Take a basis $x_1, x_2, \ldots, x_{2g}$ of $\pi_1(\Sigma_{g,1})$ such that the product $\prod_{i=1}^g [x_i,x_{i+g}]$ is parallel to the boundary component.
The homology classes $e_1, \ldots ,e_{2g}$ of $x_1, \ldots ,x_{2g}$ form a symplectic basis of the first homology group $H=H_1(\Sigma_{g,1},\Z)$.
The natural action of the mapping class group $\M_{g,1}$ on $\pi_1(\Sigma_{g,1})$ induces the Dehn-Nielsen embedding
\[
\varphi:\M_{g,1} \to \Aut(\pi_1(\Sigma_{g,1})) \cong \Aut{F_{2g}}.
\] 
Recall from \S \ref{subsec:JAut} the surjective homomorphism
$\pi:\Aut{F_{2g}} \rightarrow \GL(2g,\Z)$.
The image of $\pi_\M:=\pi \circ \varphi: \M_{g,1} \to \GL(2g,\Z)$ coincides with the integral symplectic group 
\[
\Sp(2g,\Z):=\{A \in \GL(2g,\Z) ; {}^tA J A=J\},
\]
where
$J=\left(
\begin{array}{cc}
0 & J_g \\
-J_g & 0
\end{array}
\right)$ and the $(g \times g)$-matrix
$J_g$ is equal to $\left(\begin{array}{ccc} O &  & 1 \\
                       & \rotatebox{75}{$\ddots$} &  \\
                     1 &         & O
     \end{array}\right)$.
The kernel of $\pi_\M$ is nothing but the Torelli group $\I_{g,1}$.
We obtain the following commutative diagram.
\[
\xymatrix{
1\ar[r] & \IA_{2g}\ar[r] & \Aut{F_{2g}}\ar[r]^{\pi} & \GL(2g,\Z)\ar[r] & 1 \\
1\ar[r] & \mathcal{I}_{g,1}\ar[r]\ar@{^(->}[u]^{\varphi|_{\mathcal{I}_{g,1}}} & \M_{g,1}\ar[r]_{\pi_\M}\ar@{^(->}[u]^{\varphi} & \Sp(2g,\Z)\ar[r]\ar@{^(->}[u] & 1
}
\]
For any $k\ge 1$ we set $\M_{g,1}(k):=\M_{g,1} \cap \A_{2g}(k)$, where $\A_{2g}(k)$ is the $k$th term of the Andreadakis-Johnson filtration of $\IA_{2g}$.
Let $\mathcal{I}_{g,1}=\M'_{g,1}(1) \supset \M'_{g,1}(2) \supset \cdots$ be the lower central series of $\mathcal{I}_{g,1}$.
Then we obtain the two homomorphisms
\[
\tau_k^\M:\gr^k\M_{g,1}=\M_{g,1}(k)/\M_{g,1}(k+1) \hookrightarrow H^* \otimes_\Z \L_{2g}(k+1)
\]
and
\[
{\tau'_k}^\M:\gr^k\M'_{g,1}=\M'_{g,1}(k)/\M'_{g,1}(k+1) \rightarrow H^* \otimes_\Z \L_{2g}(k+1)
\]
induced from the Dehn-Nielsen embedding and the Johnson homomorphisms of $\Aut{F_n}$.
We call $\tau_k^\M$ and ${\tau'_k}^\M$ the $k$th Johnson homomorphisms for $\M_{g,1}$. 
By an argument similar to that of the Johnson homomorphisms of $\Aut{F_n}$,
we see that the group $\Sp(2g,\Z)$ acts  naturally on the source and the target of the maps $\tau_k^\M$ and ${\tau'_k}^\M$,
and that $\tau_k^\M$ and ${\tau'_k}^\M$ are $\Sp(2g,\Z)$-equivariant
homomorphisms.
We remark that the homomorphism ${\tau'_k}^\M$ is not necessarily injective.
However, the following seminal work of Hain \cite{Hai} shows that the rational images of $\tau_k^\M$ and ${\tau'_k}^\M$ are equal.
\begin{theorem}[Hain \cite{Hai}]\label{thm-Hain}
We have $\Im\tau_{k,\Q}^\M=\Im{\tau'_{k,\Q}}^{\hspace{-3mm}\M}$ in $H_\Q^* \otimes_\Q \L_{2g}^\Q(k+1)$.
\end{theorem}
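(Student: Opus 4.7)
The inclusion $\Im{\tau'_{k,\Q}}^{\hspace{-3mm}\M} \subset \Im \tau_{k,\Q}^\M$ is essentially automatic: by Theorem \ref{tA}(ii) the Andreadakis-Johnson filtration is central in $\IA_{2g}$, hence $\M'_{g,1}(k) \subset \M_{g,1}(k)$ for all $k \ge 1$ by induction on $k$, and passing to graded quotients and tensoring with $\Q$ yields the stated inclusion. The content of the theorem is therefore the reverse inclusion, and my plan is to show that the natural map $\gr^k \M'_{g,1} \otimes \Q \to \gr^k \M_{g,1} \otimes \Q$ induced by inclusion is surjective for every $k \ge 1$.

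To establish this surjectivity I would follow Hain's mixed Hodge theoretic strategy. Since $\M_{g,1}$ is the orbifold fundamental group of a smooth quasi-projective variety, one can attach to it its relative Malcev completion with respect to the tautological representation $\pi_\M \colon \M_{g,1} \to \Sp(2g,\Q)$; the prounipotent radical $\mca{U}$ of this proalgebraic group contains the Malcev completion of $\I_{g,1}\otimes\Q$ as a Zariski dense subgroup. The theory of Morgan and Deligne, refined to relative completions by Hain, endows the pro-nilpotent Lie algebra $\mf{u} = \mathrm{Lie}(\mca{U})$ with a canonical mixed Hodge structure in such a way that the associated weight filtration matches the Johnson filtration, while the lower central series of $\mf{u}$ matches $\{\M'_{g,1}(k) \otimes \Q\}$. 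The required surjectivity then reduces to showing that $\mf{u}$ is generated as a Lie algebra by its lowest weight graded piece, i.e.\ in Johnson degree one.

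The heart of the argument, and the step I expect to be the main obstacle, is this generation statement. Hain accomplishes it by extracting an explicit quadratic presentation of $\mf{u}$: the generators sit in the lowest weight and arise as logarithms of monodromies around the boundary components of the Deligne-Mumford compactification of the moduli space, while the quadratic relations come from residue computations on the boundary together with topological identities among Dehn twists such as the lantern relation. Once quadratic generation is granted, the weight and lower-central-series filtrations of $\mf{u}$ have the same graded quotients, which gives $\gr^k \M'_{g,1} \otimes \Q = \gr^k \M_{g,1} \otimes \Q$ for every $k$ and completes the proof. The principal difficulty is thus geometric rather than Lie-algebraic: it relies on limit mixed Hodge theory and on the geometry of the Deligne-Mumford compactification, and is not accessible by the elementary combinatorial methods used in the preceding sections.
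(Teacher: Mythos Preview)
The paper does not supply its own proof of this statement: Theorem~\ref{thm-Hain} is quoted from Hain~\cite{Hai} and used as a black box, so there is nothing in the present paper to compare your proposal against.

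That said, your sketch is a fair high-level outline of Hain's actual argument. Two small corrections. First, for the equality of images you only need that the Johnson-graded Lie algebra $\bigoplus_k \gr^k(\M_{g,1})\otimes\Q$ is generated in degree one, so that the map from the lower-central-series graded object is surjective; Hain's full quadratic presentation is a strictly stronger result and is not required here. Second, the phrase ``the Malcev completion of $\I_{g,1}\otimes\Q$'' is ill-formed for a nonabelian group; what you mean is that $\I_{g,1}$ maps with Zariski-dense image into the prounipotent radical $\mca{U}$, and Hain identifies the Malcev completion of $\I_{g,1}$ with $\mca{U}$ (this step, and hence the theorem, carries a genus hypothesis $g\ge 3$ which the present paper suppresses since it works in a stable range anyway).
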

The space $H^*$ is canonically isomorphic to $H$ by the Poicar\'{e} duality and we can identify $H^* \otimes \L_{2g}(k+1)$ with $H \otimes \L_{2g}(k+1)$.
In \cite{Mo1}, Morita proved that $\Im\tau_{k}^\M \subset \mf{h}_{g,1}(k)$,
where $\mf{h}_{g,1}(k)$ is the kernel of the left bracketing homomorphism
\[
H \otimes \L_{2g}(k+1) \rightarrow \L_{2g}(k+2), \quad X\otimes u \mapsto [X,u].
\]

\subsection{Enomoto-Satoh's obstructions}
\label{subsec:ESobs}

In \cite{ES2}, Enomoto and Satoh introduced new classes in the Johnson cokernels.
These classes are defined by the $\Sp$-homomorphism
\[
c_k:\mf{h}^\Q_{g,1}(k) \hookrightarrow H_\Q \otimes_\Q \L_{2g}^\Q(k+1) \cong H_\Q^* \otimes_\Q \L_{2g}^\Q(k+1) \overset{\Theta_1}{\twoheadrightarrow} \C_{2g}^\Q(k),
\]
where $\Theta_1$ has been introduced in \S \ref{sec:Cont}. 
The following commutative diagram holds:
\[
\xymatrix{
& & \Im(\tau'_{k,\Q})\ar@{^(->}[rr] & & H_\Q^* \otimes_\Q \L_{2g}^\Q(k+1)\ar@{->>}[r]^{\hspace{2.5em} \Theta_1} & \C_{2g}^\Q(k) \\
\Im(\tau_{k,\Q}^\M)\ar@{=}[rr]^{\!\rm{Thm. \, \ref{thm-Hain}}} & & \Im({\tau'}_{k,\Q}^{\hspace{0.5mm}\M})\ar@{^(->}[r]\ar@{^(->}[u] & 
\mf{h}_{g,1}^{\Q}(k)\ar@{^(->}[r]\ar@<0.5ex>@{.>}[rru]^(.35){c_k} & H_\Q \otimes_\Q \L_{2g}^\Q(k+1)\ar@{=}[u]\ar@{->>}[r] & \L_{2g}^\Q(k+2)
}
\]
By using Theorem \ref{thm-Satoh} and Theorem \ref{thm-Hain}, they \cite{ES2} proved that 
\[
\Im(\tau_{k,\Q}^\M)\subset \Ker(c_k) \subset \mf{h}_{g,1}^\Q(k).
\]

\subsection{Kawazumi-Kuno's obstructions}
\label{subsec:KKobs}

In \cite{KK1}, Kawazumi and Kuno introduced another type of classes in the Johnson cokernels
by using some topological consideration on self-intersections of loops on the surface $\Sigma_{g,1}$.
In more detail, they considered an operation called the Turaev cobracket, and showed that its graded version $\delta^{\text{alg}}$ gives rise to an obstruction for the Johnson image.
(For more details, see \cite{KK1} and \cite{KK2}.)

The map $\delta^{\text{alg}}$ is homogeneous of degree $(-2)$ and the degree $k$ part
\[
\delta_k^{\text{alg}}: H_\Q^{\otimes k+2} \to \bigoplus_{\substack{p,q\ge 1, \\ p+q=k}}\C^\Q_{2g}(p) \otimes \C_{2g}^\Q(q)
\]
sends $a_1 \otimes \cdots \otimes a_{k+2}$ to
\[
\sum_{\substack{1 \le i<j \le k+2, \\ 1<j-i<k+1}}a_i^*(a_j)
\left\{
\begin{array}{l}
\pi(a_{i+1} \otimes \cdots \otimes a_{j-1})\otimes \pi(a_{j+1} \otimes \cdots \otimes a_{k+2} \otimes a_1 \otimes \cdots \otimes a_{i-1}) \\
-\pi(a_{j+1} \otimes \cdots \otimes a_{k+2} \otimes a_1 \otimes \cdots \otimes a_{i-1})\otimes \pi(a_{i+1} \otimes \cdots \otimes a_{j-1})
\end{array}
\right\}.
\]
Here, $a_i^* \in H^*_\Q$ is the element corresponding to $a_i\in H_\Q$ through the Poincar\'e duality $H^*_\Q=H_\Q$,
and $\pi$ denotes the projection $\pi_l: H_{\Q}^{\otimes l} \to \C^\Q_{2g}(l)$ when it is applied to $H_{\Q}^{\otimes l}$.
By restriction (and using the same letter), we obtain the map
\[
\delta_k^{\text{alg}}: \mf{h}_{g,1}(k) \to
 \bigoplus_{\substack{p,q\ge 1, \\ p+q=k}}\C^\Q_{2g}(p) \otimes \C_{2g}^\Q(q).
 \]
In \cite{KK1}, it was shown that
\[
\Im(\tau_k^\M) \subset \Ker(\delta^{\text{alg}}_k) \subset \mf{h}_{g,1}(k).
\]

\subsection{Proof of Theorem \ref{t1}}
\label{subsec:pft1}

Here we give a proof of Theorem \ref{t1}.
Recall from \S \ref{sec:Cont} the homomorphism $\Theta_\ell: H_\Q^* \otimes H_\Q^{\otimes k+1} \to \C_{2g}^\Q(\ell-1) \otimes \C_{2g}^\Q(k-\ell+1)$.
We can regard it as a map from $H_\Q^{\otimes k+2} = H_\Q \otimes H_\Q^{\otimes k+1}$ by the Poincar\'e duality.

\begin{proof}[Proof of Theorem \ref{t1}]
Let $\zeta$ be the cyclic permutation of the components of $H_\Q^{\otimes k+2}$ given by $\zeta(a_1\otimes a_2\otimes \cdots \otimes a_{k+2}):=a_2\otimes \cdots \otimes a_{k+2}\otimes a_1$ and set $\displaystyle \zeta_{k+2}:=\sum_{i=0}^{k+1} \zeta^i \in {\rm End}(H_\Q^{\otimes k+2})$.
Then, we see that
\[
\delta^{\text{alg}}_k =(\Theta_2+\cdots+\Theta_k) \zeta_{k+2}
\]
on $H^{\otimes k+2}_\Q$.
Since any element of $\mf{h}_{g,1}^\Q(k)$ is $\zeta$-invariant in $H_\Q^{\otimes k+2}$
(for instance, see \cite[Proposition 5.2]{ES2}), one has
$\delta^{\text{alg}}_k=(k+2)(\Theta_2+\cdots +\Theta_k)$ on $\mf{h}_{g,1}^\Q(k)$.


The homomorphism $\Theta_1$ is nothing but the trace map $c_k$, 
and hence $\Ker{c_k}=\Ker{\Theta_1}$.
By Theorem \ref{tC}, $\Ker{\Theta_1} \subset \Ker{\Theta_\ell}$ for any $\ell \ge 2$.
Therefore, $\Ker{c_k} \subset \Ker{\delta_k^{\mathrm{alg}}}$ on $\mf{h}_{g,1}(k)$.
\end{proof}

\begin{rem}
There is a refinement of $\delta^{\text{alg}}_k$ which uses the same formula but
we allow $j-i$ to be $1$ or $k+1$ so that $p$ and $q$ can be zero in the target.
This map comes from a framed version of the Turaev cobracket and does actually have
the same information as $c_k$.
For more detail, see \cite{AKKN} and \cite{Ka}.
\end{rem}

\section{Proof of Theorem \ref{t2}}

In this section, we prove Theorem \ref{t2}.
We consider polynomial representations of $\GL(2g,\bb{Q})$ and rational representations of $\Sp(2g,\bb{Q})$.
The isomorphism classes of $\GL$-irreducible polynomial representations are parametrized by partitions $\lambda$ such that their lengths $\ell(\lambda)$ are at most $2g$.
We denote by $(\lambda)$ the $\GL$-irreducible polynomial representation corresponding to a partition $\lambda$.
The isomorphism classes of $\Sp$-irreducible rational representations are parametrized by partitions $\lambda$ such that their lengths $\ell(\lambda)$ are at most $g$.
We denote by $[\lambda]$ the $\Sp$-irreducible rational representation corresponding to a partition $\lambda$.

\subsection{Anti-Morita obstruction $[1^k]$}
In this subsection, we prove Theorem \ref{t2}(i).

First, we recall the anti-Morita obstruction. In \cite{ES2}, we have the following result. 
\begin{theorem}[Enomoto and Satoh {\cite[Theorem 1]{ES2}}]
Suppose $g \ge k+1$ and $k \equiv 1 \pmod{4}$ and $k \ge 5$.
The multiplicities of $\Sp$-irreducible representations $[1^k]$ are exactly one in 
$\mf{h}_{g,1}^\bb{Q}(k)/\Ker(c_k)$.
\end{theorem}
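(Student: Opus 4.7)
My plan is to compute the multiplicity of $[1^k]$ in $\Im(c_k)\cong \mf{h}_{g,1}^{\bb{Q}}(k)/\Ker(c_k)$, which embeds $\Sp$-equivariantly into $\C_{2g}^{\bb{Q}}(k)$, by first bounding this multiplicity from above via representation theory of the cyclic group and then exhibiting an explicit element that realizes it.

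For the upper bound, I would locate $[1^k]$ inside the target $\C_{2g}^{\bb{Q}}(k)=(H_{\bb{Q}}^{\otimes k})_{\bb{Z}/k}$. The cyclic permutation $\zeta$ acts on $\Lambda^k H_{\bb{Q}}$ by the sign $(-1)^{k-1}$, which equals $+1$ when $k$ is odd, so $\Lambda^k H_{\bb{Q}}$ injects into $\C_{2g}^{\bb{Q}}(k)$ for odd $k$. In the stable range $g\ge k$, the $\Sp$-branching $\Lambda^k H_{\bb{Q}}\cong \bigoplus_{j\ge 0}[1^{k-2j}]$ contributes exactly one copy of $[1^k]$, and a careful Lyndon-word bookkeeping of the $\GL$-decomposition of $\C_{2g}^{\bb{Q}}(k)$ along the lines of \cite{ES1} shows that this is the only source, giving the upper bound $1$.

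For existence, I would construct an explicit element $X\in \mf{h}_{g,1}^{\bb{Q}}(k)$ of type $[1^k]$ with nonzero $c_k$-image. A natural candidate is obtained by completely antisymmetrizing over $k+2$ distinct symplectic basis vectors $e_{i_1},\ldots,e_{i_{k+2}}$, namely
\[
X=\sum_{\sigma\in \mathfrak{S}_{k+2}} \sgn(\sigma)\, e_{i_{\sigma(1)}}^{*}\otimes [e_{i_{\sigma(2)}},\ldots,e_{i_{\sigma(k+2)}}].
\]
Full antisymmetry forces the left bracketing $H\otimes \L_{2g}(k+1)\to \L_{2g}(k+2)$ to vanish on $X$, placing $X$ in $\mf{h}_{g,1}^{\bb{Q}}(k)$. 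To compute $c_k(X)=\pi_k\circ \Phi_{12}(X)$, I would expand each iterated commutator via Lemma \ref{L-HKT} and collect contractions, landing in $\Lambda^k H_{\bb{Q}}\subset \C_{2g}^{\bb{Q}}(k)$.

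The main obstacle, and the origin of the parity condition, is the nonvanishing check in the last step. The coefficient of the $[1^k]$-generator in $c_k(X)$ is a combinatorial sum of signs coming from (a) the expansion of the $(k+1)$-simple commutators, (b) the contraction $\Phi_{12}$, and (c) the cyclic averaging $\pi_k$. These signs conspire to produce a prefactor of the form $1+(-1)^{(k-1)/2}$, which is nonzero precisely when $k\equiv 1\pmod{4}$. Combined with the upper bound, this pins down the multiplicity of $[1^k]$ in the Johnson cokernel at exactly one.
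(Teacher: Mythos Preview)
First, note that the present paper does not give its own proof of this statement; it is quoted from \cite{ES2} and used as input for Theorem~\ref{t2}(i). That said, your overall strategy---bounding the multiplicity of $[1^k]$ in $\C_{2g}^{\Q}(k)\supset\Im(c_k)$ from above and then producing an explicit witness---is the right shape, and the upper-bound half is fine: since $\C_{2g}^{\Q}(k)$ is a $\GL$-quotient of $H_{\Q}^{\otimes k}$, the $\GL$--$\Sp$ branching rule forces every $\Sp$-copy of $[1^k]$ to come from the $\GL$-component $(1^k)$, whose multiplicity is one by Lemma~\ref{lem:c}.

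The existence half, however, collapses: your element $X$ is identically zero. For each fixed value of $\sigma(1)$, the remaining sum is (up to a global sign)
\[
\sum_{\tau\in \mathfrak{S}_{k+1}} \sgn(\tau)\,[e_{j_{\tau(1)}},\ldots,e_{j_{\tau(k+1)}}],
\]
the full sign-antisymmetrization of a $(k{+}1)$-fold left-normed commutator. This lies in the sign-isotypic part of $\L_{2g}(k+1)$, which vanishes for $k+1\ge 3$: as an $\mathfrak{S}_m$-module, $\mathrm{Lie}(m)$ is induced from a faithful character of $C_m$, and Frobenius reciprocity shows that the sign representation does not occur once $m\ge 3$ (the Jacobi identity already exhibits this for $m=3$). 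Hence $X=0$ for all $k\ge 2$. There is also a weight mismatch lurking here: an $\Sp$-highest weight vector for $[1^k]$ inside $\mf{h}_{g,1}^{\Q}(k)\subset H_{\Q}^{\otimes k+2}$ must have two of its $k+2$ tensor slots contributing weight zero, so it cannot be built from $k+2$ basis vectors of independent weights; correspondingly, with all indices distinct the contraction $\Phi_{12}$ would annihilate every term anyway. The construction in \cite{ES2} instead starts from a tensor involving the symplectic element $\omega=\sum_i e_i\otimes e_i^{*}$, schematically $\omega\otimes(e_1\wedge\cdots\wedge e_k)$, and then applies the Dynkin--Specht--Wever idempotent together with the cyclic symmetrizer $\zeta_{k+2}$ (compare the element $v_{[3,1^5]}\theta\zeta_{10}$ in \S4.2). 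It is the contraction of $\omega$ that makes $\Phi_{12}$ nontrivial, and the parity analysis you anticipate is carried out on that corrected vector.
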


We also recall the $\GL$-irreducible decomposition of $\C^\Q_{2g}(k)$ 
obtained by \cite{ES1}.

\begin{lemma}[{\cite[Corollary 4.2(2)]{ES1}}]\label{lem:c}
Suppose $2g \ge k$. 
The multiplicity $[\C_{2g}^\bb{Q}(k):(1^k)]$ of $(1^k)$ in $\C_{2g}^{\bb{Q}}(k)$ 
is equal to 1 if k is odd, and 0 if k is even.
\end{lemma}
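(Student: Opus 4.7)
The plan is to invoke Schur–Weyl duality and reduce the problem to a one-line computation with the sign character of $S_k$. Under the hypothesis $2g \ge k$, the tensor space $H_{\bb{Q}}^{\otimes k}$ carries commuting actions of $\GL(2g,\bb{Q})$ and of $S_k$ (the latter by permutation of tensor factors), and Schur–Weyl duality supplies the decomposition
\[
H_{\bb{Q}}^{\otimes k}\cong \bigoplus_{\lambda\vdash k}(\lambda)\otimes S^{\lambda}
\]
as a $\GL(2g,\bb{Q})\times S_k$-bimodule, where $S^{\lambda}$ denotes the Specht module.

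Next, I would note that the cyclic group $C_k \subset S_k$ generated by the $k$-cycle $(1\,2\,\cdots\,k)$ acts only on the Specht factor of each summand, and that $\C_{2g}^{\bb{Q}}(k)$ is by definition the space of $C_k$-coinvariants of $H_{\bb{Q}}^{\otimes k}$. Therefore
\[
\C_{2g}^{\bb{Q}}(k)\cong \bigoplus_{\lambda}(\lambda)\otimes (S^{\lambda})_{C_k},
\]
so the multiplicity of $(1^k)$ in $\C_{2g}^{\bb{Q}}(k)$ equals $\dim_{\bb{Q}}(S^{(1^k)})_{C_k}$.

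The concluding step is the observation that $S^{(1^k)}$ is the one-dimensional sign representation of $S_k$, on which the $k$-cycle acts by its sign $(-1)^{k-1}$. When $k$ is odd this scalar equals $1$, so $C_k$ acts trivially on $S^{(1^k)}$ and the coinvariants are one-dimensional; when $k$ is even it equals $-1$ and the coinvariants vanish. The only hypothesis genuinely used is $2g\ge \ell((1^k))=k$, which ensures that the Schur functor $\Lambda^k H_{\bb{Q}}$ is nonzero; this matches the assumption $2g\ge k$ exactly. There is no serious obstacle here — the argument is simply a packaging of Schur–Weyl duality together with a one-line sign computation.
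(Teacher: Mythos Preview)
Your argument is correct. The paper does not supply its own proof of this lemma; it simply cites \cite[Corollary~4.2(2)]{ES1}, where a general formula for the $\GL$-multiplicities in $\C_n^{\bb{Q}}(k)$ is established. That formula is itself derived via Schur--Weyl duality: one identifies the multiplicity of $(\lambda)$ in $\C_{2g}^{\bb{Q}}(k)$ with the multiplicity of the trivial character in $\mathrm{Res}^{S_k}_{C_k} S^{\lambda}$, and then evaluates this via characters (in general this yields the Kra\'skiewicz--Weyman count by major index). Your proof is exactly the specialization of that mechanism to $\lambda=(1^k)$, where the character computation collapses to the single observation that the $k$-cycle has sign $(-1)^{k-1}$. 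So you have recovered the cited result by the same route, short-circuiting the general formula in favor of the one case actually needed here.
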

\begin{proof}[Proof of Theorem \ref{t2}(i).] 
Note that $g \ge k+1$ implies $2g \ge k$.
Assume $k \equiv 1 (\mathrm{mod} \ 4)$ and $k \ge 5$.

To prove that the $\Sp$-homomorphism $\delta_k^{\text{alg}}: \mf{h}_{g,1}(k) \to \bigoplus_{\substack{p,q\ge 1, \\ p+q=k}}\C^\Q_{2g}(p) \otimes \C_{2g}^\Q(q)$
annhilates the $\Sp$-irreducible component $[1^k]$ in $\mf{h}_{g,1}(k)/\Ker(c_k)$, 
it is sufficient to show that $[1^k]$ does not appear in all $\C^\Q_{2g}(p) \otimes \C_{2g}^\Q(q)$ for $p,q \ge 1$ and $p+q=k$.
By the $\GL$-$\Sp$ branching rule, 
it is enough to show that there is no $\GL(2g, \bb{Q})$-irreducible representation $(1^k)$ in 
$\C_{2g}^\bb{Q}(p) \otimes \C_{2g}^{\bb{Q}}(q)$ for $p+q=k$ and $p,q\ge 1$.

For partitions $\mu$ and $\nu$ of $p$ and $q$ respectively, suppose $(\mu) \otimes (\nu)$ has the $\GL$-irreducible representation $(1^k)$.
If $\ell(\mu)<p$ or $\ell(\nu)<q$, we have $\ell(\mu)+\ell(\nu)<k$. Then by the Littlewood-Richardson rule, there is no $\GL$-irreducible representation $(1^k)$ in $(\mu) \otimes (\nu)$. Hence, we consider $\ell(\mu)=p$ and $\ell(\nu)=q$.
This case is nothing but $\mu=(1^p)$ and $\nu=(1^q)$. Since $p+q=k \equiv 1 \pmod{4}$, the signatures of $p$ and $q$ are different.
By Lemma \ref{lem:c}, there is no component $(1^p) \otimes (1^q)$ in $\C_{2g}^\bb{Q}(p) \otimes \C_{2g}^{\bb{Q}}(q)$. This is a contradiction.
\end{proof}
\begin{rem}\label{rem:c}
Especially, for $5\le k \equiv 1 \pmod{4}$ and $g\ge k+1$,
an $\Sp$-irreducible component $[1^k]$ appears in $\Ker(\Theta_2)/\Ker(c_k)$,
thus $\Ker(c_k) \neq \Ker(\Theta_2)$.
\end{rem}
\subsection{A hook-type component $[3,1^5]$}
\quad In this subsection, we prove Theorem \ref{t2} (ii). \\
\quad First, in \cite[Theorem 1.1]{EE},
several series of hook-type Sp-irreducible components $[r+1,1^{k-r-1}]$ 
are detected in $k$th Johnson cokernel $\mf{h}_{g,1}(k)/\Ker(c_k)$.
An $\Sp$-irreducible representation $[3,1^5]$ for $k=8$ and $r=2$ is one of such components.
\begin{prop}
For $g\ge 9$, an $Sp$-irreducible component $[3,1^5]$ appears in $\mf{h}_{g,1}(8)/\Ker(c_8)$.
\end{prop}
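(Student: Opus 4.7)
The plan is to deduce the proposition directly from \cite[Theorem 1.1]{EE}, which is cited in the paragraph immediately preceding the statement. In that theorem of the first author and H. Enomoto, several infinite series of hook-type $\Sp$-irreducible components $[r+1,1^{k-r-1}]$ are shown to lie in the Johnson cokernel $\mf{h}^\bb{Q}_{g,1}(k)/\Ker(c_k)$, and the partition $[3,1^5]$ is precisely the $(k,r)=(8,2)$ instance of one such family. The hypothesis $g\ge 9$ is then the stability condition $g\ge k+1$ required by that construction.

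Concretely I would proceed in three short steps. First, verify that the pair $(k,r)=(8,2)$ together with the bound $g\ge 9$ lies within the hypotheses of \cite[Theorem 1.1]{EE}. Second, recall the strategy used there: from the $\GL$-irreducible decomposition of $\C_{2g}^\bb{Q}(k)$ obtained in \cite{ES1}, together with the $\GL$-$\Sp$ branching rule, the irreducible $[r+1,1^{k-r-1}]$ occurs with positive multiplicity in $\C_{2g}^\bb{Q}(k)$; \cite{EE} then produces an explicit highest-weight vector $v\in\mf{h}^\bb{Q}_{g,1}(k)$ of that weight whose image $c_k(v)$ generates the corresponding $\Sp$-isotypic component in the target. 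Third, specialize to $(k,r)=(8,2)$: the resulting highest-weight vector has non-zero image under $c_8$, hence it represents a non-trivial class of type $[3,1^5]$ in $\mf{h}_{g,1}(8)/\Ker(c_8)$.

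The step that carries the real content in \cite{EE}, and which I would flag as the main obstacle were the proof to be made self-contained here, is the construction of the highest-weight vector together with the verification that $c_k$ does not annihilate it. This ultimately reduces to a delicate combinatorial identity involving Young symmetrizers applied to iterated simple commutators in $\L_{2g}$, evaluated modulo the cyclic symmetry defining $\C_{2g}^\bb{Q}(k)$. For the present proposition, however, it suffices to invoke \cite[Theorem 1.1]{EE} at $(k,r)=(8,2)$; no further computation is needed.
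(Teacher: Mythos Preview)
Your proposal is correct and matches the paper's treatment exactly: the proposition is stated in the paper without a proof environment, as an immediate specialization of \cite[Theorem~1.1]{EE} to $(k,r)=(8,2)$ under the stability bound $g\ge k+1=9$. Your elaboration on the highest-weight-vector construction in \cite{EE} accurately reflects what is later recalled in the paper as \cite[Proposition~3.8]{EE}, namely $c_8(v_{[3,1^5]}\theta\zeta_{10})\neq 0$.
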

Note that the multiplicity of $[3,1^5]$ is larger than or equal to $1$ in each 
$\C_{2g}^\bb{Q}(p) \otimes \C_{2g}^{\bb{Q}}(8-p)$ for $1 \le p \le 7$. 
Therefore, to prove that $[3,1^5]$ lies in 
$\Ker(\delta_8^{\text{alg}})/\Ker(c_8)$,
we need to use a different way from the previous subsection.
We consider a maximal vector which gives a component $[3,1^5]$ in $\mf{h}_{g,1}(8)/\Ker(c_8)$ and 
prove that it lies in $\Ker(\Theta_\ell)$ for $2 \le \ell \le 8$.

As in \S \ref{sec:JMG} we fix a symplectic basis $\{e_1, \ldots ,e_g,e_{g+1}, \ldots ,e_{2g}\}$ of $H_\bb{Q}$.
Set $i':=2g-i+1$ for each integer $1 \le i \le 2g$.
We see that
\begin{eqnarray*}
\langle e_i,e_j\rangle=0=\langle e_{i'},e_{j'}\rangle,\quad 
\langle e_i,e_{j'}\rangle=\delta_{ij}=-\langle e_{j'},e_{i}\rangle, \quad (1 \le i,j \le g). 
\end{eqnarray*}
For each integer $1 \le i \le 2g$, we define 
$
e_i^*=\left\{
\begin{array}{ll}
e_{i'}, & (1 \le i \le g), \\
-e_{i'}, & (g+1 \le i \le 2g).
\end{array}
\right.
$
Then
$\langle e_i,e_j^* \rangle=\delta_{ij}$ for any $i,j$. 
Set $\omega=\displaystyle \sum_{i=1}^{2g}e_i \otimes e_i^* \in H_{\bb{Q}}^{\otimes{2}}$. 
We identify $H_\bb{Q}$ with $H_\bb{Q}^*$ by $v \mapsto \langle v,\bullet \rangle$. 
Note that $\langle e_{r'},e_r\rangle e_{r'}^*=e_r$ for $1 \le r \le 2g$. \\
\quad We define 
\[
v_{[3,1^5]}:=\omega \otimes (e_1 \wedge e_2 \wedge e_3 \wedge e_4 \wedge e_5 \wedge e_6) \otimes e_1 \otimes e_1
\in H_{\Q}^{\otimes 10}
\]
where $e_1 \wedge e_2 \wedge \cdots \wedge e_6$
is the anti-symmetrizer $\displaystyle \sum_{\sigma \in \mf{S}_6}\sgn(\sigma) (e_{\sigma(1)} \otimes e_{\sigma(2)} \otimes \cdots \otimes e_{\sigma(6)}) 
\in H_{\bb{Q}}^{\otimes{6}}$. \\
\quad Let $s_i$ be the permutation of $i$ and $i+1$. 
By the Brauer-Schur-Weyl duality, the set of elements 
$\{v_{[3,1^5]} \cdot \tau \cdot \theta \cdot \zeta_{10} \ (\tau \in \mf{S}_{10})\}$ generates 
the space of $\Sp$-maximal vectors corresponding to $\Sp$-irreducible components $[3,1^5]$ in $\mf{h}_{g,1}(8)$, 
where $\theta=(1-s_2)(1-s_3s_2) \cdots (1-s_9s_8 \cdots s_2)$ is the Dynkin-Specht-Wever idempotent and 
$\zeta_{10} \in {\rm End}(H_\Q^{\otimes 10})$
is defined in the proof of Theorem \ref{t1}. \\
\quad In \cite{EE}, a component $[3,1^5]$ is detected in $\mf{h}_{g,1}(8)/\Ker(c_8)$ by proving the following claim.
\begin{prop}[{\cite[Proposition 3.8]{EE}}]
$c_8(v_{[3,1^5]}\theta\zeta_{10}) \neq 0$.
\end{prop}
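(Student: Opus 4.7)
The aim is to verify that the $\Sp$-equivariant trace $c_8=\pi_8\circ\Phi_{12}$ does not kill the maximal vector $v_{[3,1^5]}\theta\zeta_{10}$. My plan exploits the fact that $c_8$ is $\Sp$-equivariant: $v_{[3,1^5]}\theta\zeta_{10}$ is a highest-weight vector in $\mf{h}_{g,1}^\Q(8)$ of $\Sp$-type $[3,1^5]$, so its image in $\C_{2g}^\Q(8)$ is either zero or again a $[3,1^5]$-highest-weight vector. Hence it suffices to exhibit a single nonzero matrix entry of $c_8(v_{[3,1^5]}\theta\zeta_{10})$ in $\C_{2g}^\Q(8)$.

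First I would unpack the input. Writing $\omega=\sum_{r=1}^{2g}e_r\otimes e_r^*$ and expanding the anti-symmetrizer $e_1\wedge\cdots\wedge e_6=\sum_{\sigma\in\mf{S}_6}\sgn(\sigma)\,e_{\sigma(1)}\otimes\cdots\otimes e_{\sigma(6)}$ makes $v_{[3,1^5]}$ explicit in $H_\Q^{\otimes 10}$. The idempotent $\theta$ acts on positions $2,\ldots,10$ and projects onto the length-$9$ Lie bracket component, so $v_{[3,1^5]}\theta$ sits inside $H\otimes\L_{2g}(9)$; the cyclic symmetrizer $\zeta_{10}$ then forces $\zeta$-invariance, placing the result in $\mf{h}_{g,1}^\Q(8)$. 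The map $c_8$ pairs the first tensor slot with the leading letter of the Lie word via the symplectic form (contracting against $\omega$ with a summand $e_r^*$ collapses using $\sum_r\langle e_r^*,x\rangle e_r=x$), and then reads the remaining eight tensor factors modulo cyclic rotation. I would choose the test cyclic class
\[
[w]:=[e_1\otimes e_1\otimes e_1\otimes e_2\otimes e_3\otimes e_4\otimes e_5\otimes e_6]\in\C_{2g}^\Q(8),
\]
whose $\Sp$-weight $(3,1,1,1,1,1,0,\ldots)$ is exactly the $[3,1^5]$ highest weight, and compute its coefficient. Only those cyclic rotations of $\zeta_{10}$ and those $\sigma\in\mf{S}_6$ whose combined output, after contraction, yields the cyclic class of $w$ contribute, and each contributing $(\sigma,\text{rotation},\text{Lie branch})$ triple carries a determined sign.

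The main obstacle is the combinatorial size of the computation: there are $720$ terms from the wedge, $10$ from $\zeta_{10}$, and a bracket expansion with up to $2^8$ signed summands, and the signs must combine coherently. I expect the cleanest path to success to be an indirect one: first show, via the $\Sp$-weight multiplicity formula for $\C_{2g}^\Q(8)$ deducible from \cite{ES1} and the $\GL$-$\Sp$ branching rule, that $[3,1^5]$ does occur in the $\Sp$-decomposition of $\C_{2g}^\Q(8)$; then identify an explicit $[3,1^5]$-highest-weight vector $u$ in $\C_{2g}^\Q(8)$ and compute only the coefficient of a single monomial $w$ in both $u$ and $c_8(v_{[3,1^5]}\theta\zeta_{10})$. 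Comparing these two coefficients (which reduces to a short sign count because the highest-weight space of $[3,1^5]$ is one-dimensional) forces $c_8(v_{[3,1^5]}\theta\zeta_{10})$ to be a nonzero scalar multiple of $u$. This avoids carrying the full expansion through the argument and isolates the nontriviality to a finite, checkable sign computation.
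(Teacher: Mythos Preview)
The paper does not supply its own proof of this proposition: it is quoted with attribution to \cite[Proposition 3.8]{EE} and then used as input for the theorem that follows. So there is no in-paper argument to compare against.

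That said, what you have written is a proof strategy, not a proof. You correctly observe that by $\Sp$-equivariance it suffices to detect a single nonzero coordinate of $c_8(v_{[3,1^5]}\theta\zeta_{10})$ in $\C_{2g}^\Q(8)$, and the test class $[w]$ you propose has the right weight. But the entire content of the proposition \emph{is} that such a coefficient is nonzero, and you never compute it: you describe which terms would contribute, acknowledge the combinatorial size, and then assert that the outcome ``reduces to a short sign count'' without performing it. Your indirect route via an auxiliary highest-weight vector $u$ does not help: it presupposes that the $[3,1^5]$-highest-weight space in $\C_{2g}^\Q(8)$ is one-dimensional (unverified), and even granting that, comparing the $[w]$-coefficients of $u$ and of $c_8(v_{[3,1^5]}\theta\zeta_{10})$ still requires exactly the coefficient computation you are trying to avoid. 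If that coefficient is nonzero you are done without $u$; if it is zero, knowing $u$ does not rescue you.

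In short, the plan is sound but nothing has been proved. The argument in \cite{EE} proceeds by a direct computation, using an explicit closed formula for $v_{[3,1^5]}\theta$ (the very formula reproduced in the paper just before the theorem) and then evaluating $c_8=\pi_8\circ\Phi_{12}$ on it. Carrying out either that calculation or the coefficient extraction you outline is unavoidable.
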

Recall from \S \ref{subsec:pft1} that, up to scalar, $\delta_{8}^{\text{alg}}$ is equal to $\Theta_2+ \cdots +\Theta_8$.
Therefore the following theorem implies Theorem \ref{t2} (ii).
\begin{theorem}
For $2 \le \ell \le 8$, we have $\Theta_\ell(v_{[3,1^5]}\theta\zeta_{10})=0$.
\end{theorem}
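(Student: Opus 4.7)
The plan is a direct calculation of $\Theta_\ell(v_{[3,1^5]}\theta\zeta_{10})$ for each $\ell \in \{2,\ldots,8\}$. The starting observation is that since $v_{[3,1^5]}\theta\zeta_{10}$ is cyclically invariant in $H_\Q^{\otimes 10}$ (being in $\mf{h}_{g,1}^\Q(8)$), we have
\[
\Theta_\ell(v_{[3,1^5]}\theta\zeta_{10}) = \sum_{i=0}^{9} \Theta_\ell(v_{[3,1^5]}\theta\zeta^i),
\]
and each $\Theta_\ell \circ \zeta^i$ amounts to contracting the pair of positions of $v_{[3,1^5]}\theta$ at cyclic distance $\ell$, followed by the ordered cyclic projection on the remaining eight positions. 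First I would expand $v_{[3,1^5]}\theta$ via the Dynkin-Specht-Wever formula as a scalar multiple of $[e_r, e_r^*, e_{\sigma(1)}, \ldots, e_{\sigma(6)}, e_1, e_1]$ (embedded in $H_\Q^{\otimes 10}$), summed over $r \in \{1,\ldots,2g\}$ and $\sigma \in \mf{S}_6$ with signs $\sgn(\sigma)$, and then use Lemma \ref{L-HKT} to write out this bracket as a signed sum of rank-one tensors with $e_r$ floating among the other nine slots.

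Two algebraic reductions would then do the heavy lifting. First, the identities $\sum_r \langle e_r, a\rangle e_r^* = a$ and $\sum_r \langle e_r^*, a\rangle e_r = -a$ for $a \in H_\Q$, together with $\sum_r \langle e_r, e_r^*\rangle = 2g$, show that whenever the contraction in $\Theta_\ell$ touches one or both of the $\omega$-factors $e_r, e_r^*$, the summation over $r$ collapses to a fixed vector or to a scalar. Second, the wedge $e_1 \wedge \cdots \wedge e_6$ is totally antisymmetric in its six positions, while the projections $\pi_{\ell-1}$ and $\pi_{9-\ell}$ in the target of $\Theta_\ell$ identify cyclic rotations; hence whenever a cyclic block receives two wedge entries that can be interchanged by a cyclic rotation of that block, antisymmetry annihilates the contribution.

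Combining these reductions, I would enumerate the residual contributions by the types of the two contracted positions (both from $\omega$, one from $\omega$ and one wedge factor, one from $\omega$ and one trailing $e_1$, two wedge factors, one wedge factor and one $e_1$, or both $e_1$'s) and by the value of $\ell$. For each surviving case I would establish pairwise cancellation using either the wedge antisymmetry, the $\Z/2$-symmetry that swaps the two trailing $e_1$'s in $v_{[3,1^5]}$, or a Jacobi-type identity inherent in the Lemma \ref{L-HKT} expansion of the left-nested bracket. A representative example is that a contribution in which both trailing $e_1$'s lie in the same cyclic block, when paired with its partner obtained by swapping the last two slots before applying $\theta$, should combine to zero by an argument analogous to the $K_3$-type cancellations in the proof of Theorem \ref{tC}.

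The main obstacle I anticipate is the combinatorial bookkeeping. The Dynkin-Specht-Wever expansion of a ten-fold left-nested bracket yields many signed tensors; combined with the ten cyclic shifts, the $6!$ permutations of the wedge, and the seven values of $\ell$, a naive enumeration is unwieldy. The delicate task will be to organize the double sum (over $\sigma \in \mf{S}_6$ and over $r$) so that the cancellations become transparent, most likely by grouping contributions according to the orbits of a well-chosen symmetry group acting on the ten positions. Getting the signs right---in particular, matching the wedge signs $\sgn(\sigma)$ against the Dynkin-Specht-Wever signs and the cyclic-projection signs---is where the bulk of the work will lie.
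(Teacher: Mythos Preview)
Your proposal and the paper's proof are both direct computations, but the organizations differ in ways that matter. The paper does not expand $v_{[3,1^5]}\theta$ via the raw Dynkin--Specht--Wever formula. Instead it imports a closed formula from \cite[Proposition~3.3]{EE},
\[
v_{[3,1^5]}\theta=\boldsymbol{v}_1+\boldsymbol{v}_2+\boldsymbol{v}_3,
\]
where each $\boldsymbol{v}_i$ is a short linear combination of the vectors $(e_1\wedge\cdots\wedge e_6)\otimes e_1^{\otimes 2}$, $e_1\otimes(e_1\wedge\cdots\wedge e_6)\otimes e_1$, $e_1^{\otimes 2}\otimes(e_1\wedge\cdots\wedge e_6)$ acted on by expansion operators $D_{1j}$. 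This collapses the $r$-sum and most of the DSW combinatorics before any contraction is applied. After introducing the shorthand $\Lambda_{a,b}\in H_\Q^{\otimes 8}$, the paper computes $\Phi_{1,\ell+1}(v_{[3,1^5]}\theta\zeta_{10})$ for $\ell=2,3,4,5$ (the cases $\ell=6,7,8$ being symmetric) as an explicit linear combination of a handful of $\Lambda_{a,b}$'s, and then checks that each combination lies in $\Ker(\varpi_\ell)$.

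The second point where the paper departs from your plan is in how the vanishing in $\Ker(\varpi_\ell)$ is established. Several terms are not killed by the elementary symmetries you list (wedge antisymmetry, swapping the two trailing $e_1$'s, Jacobi-type rearrangements). For instance, the term $-(4g)(\Lambda_{1,2}+\Lambda_{1,8})$ in $\Phi_{13}(v_{[3,1^5]}\theta\zeta_{10})$ is disposed of by observing that its $\C_{2g}(7)$-factor is a maximal vector of weight $(2,1^5)$, and that $(2,1^5)$ has multiplicity zero in $\C_{2g}^\Q(7)$; similarly, a term in the $\ell=3$ case is killed because $[1^6]$ does not occur in $\C_{2g}^\Q(6)$. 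These are representation-theoretic inputs (from \cite{ES1}) rather than termwise cancellations, and your outline does not account for them. Without either invoking such multiplicity facts or supplying an ad hoc combinatorial proof of the corresponding vanishing in $\C_{2g}^\Q(k)$, your case analysis will leave residual terms that do not visibly cancel.

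In short: your strategy would eventually reach the same destination, but as stated it is missing the compact expression for $v_{[3,1^5]}\theta$ that makes the computation tractable, and it underestimates the role of the $\GL$-irreducible decomposition of $\C_{2g}^\Q(k)$ in killing the leftover terms.
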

\begin{proof}
Note that it is sufficient to prove the claim for $\ell=2,3,4,5$. 
We use the following notations. The $(i,j)$-expansion operator $D_{ij}:H_\bb{Q}^{\otimes{k}} \to H_\bb{Q}^{\otimes k+2}$ is given by
\[
(v_1 \otimes \cdots \otimes v_k)D_{ij}=
\sum_{r=1}^{2g}v_1 \otimes \cdots \otimes v_{i-1} \otimes e_r \otimes v_i \otimes \cdots \otimes v_{j-2} \otimes e_r^* \otimes v_{j-1} \otimes \cdots \otimes v_k.
\]
The element $\Lambda_{a,b}\in H_{\Q}
^{\otimes 8}$ is given by 
\[
\sum_{\sigma \in \mf{S}_6}\mathrm{sgn}(\sigma)
e_{\sigma(1)} \otimes \cdots \otimes e_{\sigma(a-1)} \otimes e_1 \otimes e_{\sigma(a)} \otimes \cdots \otimes e_{\sigma(b-2)} \otimes e_1 \otimes e_{\sigma(b-1)} \otimes \cdots \otimes e_{\sigma(6)}.
\]
In \cite[Proposition 3.3]{EE}, we have
\begin{eqnarray*}
v_{[3,1^5]}\theta&=&
(e_1 \wedge \cdots \wedge e_6) \otimes e_1^{\otimes 2} \cdot (D_{12}-3D_{14}+3D_{16}-D_{18}) \\
&{}&+e_1 \otimes (e_1 \wedge \cdots \wedge e_6) \otimes e_1 \cdot (-2D_{13}+6D_{15}-6D_{17}+2D_{19}) \\
&{}&+e_1^{\otimes 2} \otimes (e_1 \wedge \cdots \wedge e_6) \cdot (D_{14}-3D_{16}+3D_{18}-D_{1,10}).
\end{eqnarray*}
Let us denote the three terms in the right hand side by
$\boldsymbol{v}_1,\boldsymbol{v}_2$ and $\boldsymbol{v}_3$. \\
\quad For the $13$-contraction operator $\Phi_{13}$, we obtain 
\begin{eqnarray*}
&{}&\Phi_{13}(\boldsymbol{v}_1)=2\Lambda_{1,8}+2\Lambda_{6,7}-2\Lambda_{2,3}+3\Lambda_{1,4}-3\Lambda_{1,6}, \\
&{}&\Phi_{13}(\boldsymbol{v}_2)=(-4g-2)\Lambda_{1,8}+(-4g-2)\Lambda_{1,2}-4\Lambda_{6,8}+4\Lambda_{2,4}, \\
&{}&\Phi_{13}(\boldsymbol{v}_3)=2\Lambda_{1,2}-2\Lambda_{3,4}+2\Lambda_{7,8}-3\Lambda_{1,4}+3\Lambda_{1,6}.
\end{eqnarray*}
Then we have 
\[
\Phi_{13}(v_{[3,1^5]}\theta\zeta_{10})=
-(4g)(\Lambda_{1,2}+\Lambda_{1,8})
+2(\Lambda_{6,7}-\Lambda_{2,3})+4(\Lambda_{2,4}-\Lambda_{6,8})+2(\Lambda_{7,8}-\Lambda_{3,4}).
\]
The first term is in the kernel of $\varpi_2:H_\bb{Q}^{\otimes{8}} \to C_{2g}(1) \otimes C_{2g}(7)$ because 
$\Lambda_{1,2}$ and $\Lambda_{1,8}$ are of the form 
$e_1 \otimes (\text{a maximal vector with weight} \ (2,1^5) \ \text{in} \ H_{\bb{Q}}^{\otimes{7}})$, and $(2,1^5)$ does not appear in $C_{2g}(7)$ 
(\cite[Corollary 4.2]{ES2}).
The remaining three terms are also in the kernel of $\varpi_2$ because they cancel each other in $C_{2g}(1) \otimes C_{2g}(7)$.
Hence we obtain $v_{[3,1^5]}\theta\zeta_{10} \in \Ker(\Theta_2)$. \\
\quad For the $14$-contraction operator $\Phi_{14}$, we have 
\begin{eqnarray*}
\Phi_{14}(v_{[3,1^5]}\theta\zeta_{10})&=&
(4g)\Lambda_{1,2}+(-6g+1)(\Lambda_{3,4}+\Lambda_{7,8}) \\
&{}& -2\Lambda_{3,5}-2\Lambda_{4,5}+6\Lambda_{4,6}-6\Lambda_{5,6}+6\Lambda_{5,7}-2\Lambda_{6,7}-2\Lambda_{6,8}.
\end{eqnarray*}
The first term is in the kernel of $\varpi_3$ because $[1^6]$ does not appear in $C_{2g}(6)$. All the other terms are in the kernel of $\varpi_3$ 
because each term is contained in $e_i \otimes e_j \otimes v-e_j \otimes e_i \otimes v \in H_\bb{Q}^{\otimes{8}}$.
Hence we obtain $v_{[3,1^5]}\theta\zeta_{10} \in \Ker(\Theta_3)$. \\
\quad For the $15$-contraction operator $\Phi_{15}$, we have 
\begin{eqnarray*}
\Phi_{15}(v_{[3,1^5]}\theta\zeta_{10})&=&12g(\Lambda_{1,8}+\Lambda_{3,4}) \\
&{}&+4(\Lambda_{4,5}-\Lambda_{7,8})+4(\Lambda_{5,6}-\Lambda_{6,7})+8(\Lambda_{6,8}-\Lambda_{4,6})
.
\end{eqnarray*}
In the first term, by dividing
$\sum_{\sigma \in \mf{S}_6}$ into $\sum_{\sigma(1) \ \text{or} \ \sigma(2)=1}
+\sum_{\sigma(3) \ \text{or} \ \sigma(6)=1}
+\sum_{\sigma(4) \ \text{or} \ \sigma(5)=1}$, they are in the kernel of $\varpi_4$.
The remaining three terms are also in the kernel of $\varpi_4$ because they cancel each other in $C_{2g}(3) \otimes C_{2g}(5)$.
Thus we obtain $v_{[3,1^5]}\theta\zeta_{10} \in \Ker(\Theta_4)$. \\
\quad For the $16$-contraction operator $\Phi_{16}$, we have 
\begin{eqnarray*}
\Phi_{16}(v_{[3,1^5]}\theta\zeta_{10})&=&-(6g+1)(\Lambda_{1,2}+\Lambda_{3,4}-\Lambda_{5,6}-\Lambda_{7,8}) \\
&{}& +2(\Lambda_{6,7}-\Lambda_{2,3}+\Lambda_{2,4}-\Lambda_{6,8}+\Lambda_{1,3}-\Lambda_{5,7})
.
\end{eqnarray*}
Since the projection $H_\bb{Q}^{\otimes{4}} \to \C_{2g}^\Q(4)$ annihilates the elements $e_i \wedge e_j \wedge e_k \wedge e_\ell$,
all the terms are in the kernel of $\varpi_5$.
Therefore we obtain $v_{[3,1^5]}\theta\zeta_{10} \in \Ker(\Theta_5)$. 
\end{proof}

\section*{Acknowledgments}\label{S-Ack}

The first author is supported by JSPS KAKENHI 26870368 and 18K03204.
He also would like to thank Hikoe Enomoto for his careful support on some computer calculations.
The second author is supported by JSPS KAKENHI 26800044 and 18K03308.
The third author is supported by JSPS KAKENHI 24740051 and 16K05155.

\bibliographystyle{amsplain}

\end{document}